\definecolor{linkred}{rgb}{0.7,0.2,0.2}
\definecolor{linkblue}{rgb}{0,0.2,0.6}
\numberwithin{figure}{section}
\newcommand{\sC}{\mathcal{C}}
\newcommand{\sE}{\mathcal{E}}
\newcommand{\sI}{\mathcal{I}}
\newcommand{\sK}{\mathcal{K}}
\newcommand{\sM}{\mathcal{M}}
\newcommand{\OO}{\mathcal{O}}
\newcommand{\sS}{\mathcal{S}}
\newcommand{\sU}{\mathcal{U}}
\newcommand{\sV}{\mathcal{V}}
\newcommand{\sW}{\mathcal{W}}
\newcommand{\sX}{\mathcal{X}}
\newcommand{\sY}{\mathcal{Y}}
\newcommand{\ol}[1]{\overline{#1}}
\newcommand{\hU}{\ol{U}}
\newcommand{\hV}{\ol{V}}
\newcommand{\hW}{\ol{W}}
\newcommand{\hX}{\ol{X}}
\newcommand{\hY}{\ol{Y}}
\newcommand{\hZ}{\ol{Z}}
\newcommand{\iX}{X^\circ}
\newcommand{\iY}{Y^\circ}
\newcommand{\mA}{A^\infty}
\newcommand{\mU}{U^\infty}
\newcommand{\mV}{V^\infty}
\newcommand{\mW}{W^\infty}
\newcommand{\mX}{X^\infty}
\newcommand{\mY}{Y^\infty}
\newcommand{\Zar}{\textrm{Zar}}
\newcommand{\et}{\textrm{{\'e}t}}
\newcommand{\Nis}{\operatorname{Nis}}
\newcommand{\sNis}{\operatorname{sNis}}
\newcommand{\Bl}{\operatorname{Bl}}
\newcommand{\smod}{\operatorname{mod}}
\newcommand{\lmod}{\operatorname{lmod}}
\newcommand{\length}{\operatorname{length}}
\renewcommand{\div}{\operatorname{div}}
\newcommand{\Sm}{\mathrm{Sm}}
\newcommand{\Pro}{\mathrm{Pro}}
\newcommand{\PSm}{\mathrm{\underline{P}Sm}}
\newcommand{\mSm}{\mathrm{mSm}}
\newcommand{\mmH}{\mathrm{mH}}
\newcommand{\mmSH}{\mathrm{mSH}}
\newcommand{\MSm}{\mathrm{\underline{M}Sm}}
\newcommand{\PSCH}{\mathrm{\underline{P}SCH}}
\newcommand{\qc}{\mathrm{qc}}
\newcommand{\SmlSm}{\mathrm{SmlSm}}
\newcommand{\MDM}{\operatorname{\underline{M}DM}}
\newcommand{\MH}{\operatorname{\underline{M}H}}
\newcommand{\MSH}{\operatorname{\underline{M}SH}}
\newcommand{\Sp}{\operatorname{Sp}}
\newcommand{\logH}{\operatorname{logH}}
\newcommand{\logDM}{\operatorname{logDM}}
\newcommand{\PSh}{\operatorname{PSh}}
\newcommand{\Shv}{\operatorname{Shv}}
\newcommand{\Cor}{\operatorname{Cor}}
\newcommand{\MCor}{\operatorname{\underline{M}Cor}}
\newcommand{\lCor}{\operatorname{lCor}}
\newcommand{\CI}{\operatorname{CI}}
\newcommand{\BI}{\operatorname{BI}}
\newcommand{\Map}{\operatorname{Map}}
\newcommand{\Spec}{\mathrm{Spec}}
\newcommand{\red}{\mathrm{red}}
\theoremstyle{theorem}
\newtheorem{theo}{Theorem}[section]
\newtheorem{coro}[theo]{Corollary}
\newtheorem{lemm}[theo]{Lemma}
\newtheorem{prop}[theo]{Proposition}
\theoremstyle{definition}
\newtheorem{defi}[theo]{Definition}
\newtheorem{rema}[theo]{Remark}
\newtheorem{exam}[theo]{Example}
\DeclareSymbolFontAlphabet{\scr}{rsfs}
\newcommand{\NN}{\mathbb{N}}
\newcommand{\PP}{\mathbb{P}}
\newcommand{\QQ}{\mathbb{Q}}
\newcommand{\ZZ}{\mathbb{Z}}
\renewcommand{\AA}{\mathbb{A}}
\DeclareMathOperator{\id}{id}
\DeclareMathOperator{\colim}{colim}
\newcommand{\bcube}{\overline{\square}}
\newcommand{\PrL}{\mathcal{P}{\operatorfont{r^L}}}
\newcommand{\nc}{\mathrm{}}
\title{
Log homotopy types are \\ homotopy types with modulus
}
\author{Shane Kelly}
\date{\today}
\begin{document}

\maketitle

\begin{abstract}
We show that the category of log homotopy types is a full subcategory of a category of homotopy types with modulus.
\end{abstract}

\section{Introduction}

Morel and Voevodsky's $\AA^1$-homotopy theory has many successes, most famously the proof of the Norm Residue Theorem 
comparing Milnor $K$-theory to étale cohomology with coefficients away from the characteristic. However, by definition it cannot see non-$\AA^1$-invariant phenomena such Hodge cohomology or Hochschild homology. It also fails see to invariants that are non-$\AA^1$-invariant adjacent such as wild ramification.

In recent years a number of larger categories have been proposed such as motives with modulus, \cite{KMSYIII}, log motives, \cite{BPO22}, and motivic spectra, \cite{AI22}, \cite{AHI24}. 

\emph{Motives with modulus} are designed to study cohomology groups of a variety $X$, such as $H_{\et}^1(X, \QQ / \ZZ)$ or $H^n_{\Zar}(X, \Omega^m)$, whose classes can be bounded by something like ramification order or pole order; the bound is expressed by a divisor $D$. The rôle of $\AA^1$ is replaced by the pair $(\PP^1, \infty)$, that is, the space of cohomology classes on $\PP^1$ with ``tame ramification'' or ``simple poles'' at infinity. \emph{Log motives} are designed to study cohomology groups of log schemes. The rôle of $\AA^1$ is replaced by the log scheme $(\PP^1, \OO_{\PP^1} \cap \OO_{\AA^1}^*)$, that is, the compactifying log structure for the open immersion $\AA^1 \subseteq \PP^1$. \emph{Motivic spectra} is an approach that systematically replaces the use of $\AA^1$ in the Morel--Voevodsky theory with $\PP^1$-stability and smooth blowup excision. Motivic spectra do not appear in this note.

In this note we compare an unstable homotopy theory of modulus pairs $\MH_k^{\nc}$ with the unstable homotopy theory of log schemes $\logH_k$, filling in the details of one of the comparisons suggested to Miyazaki and Saito in \cite{Kel20}. Heuristically, the equivalence in Theorem~\ref{theo:one} expresses the idea that log cohomology is the first level of the ``ramification'' or ``pole'' filtration, namely, the ``tame ramification'' or ``simple pole'' level. The notation is explained as the introduction progresses culminating in Eq.\eqref{equa:MHkintro} and Eq.\eqref{equa:logHintro} on Page~\pageref{equa:MHkintro}.

\begin{theo}[{Theorem~\ref{theo:main}}] \label{theo:one}
The canonical comparison functor induces an equivalence%
\footnote{%
Here $(-)[\Lambda^{-1}]$ means localisation in the category $\PrL$ of presentable $\infty$-categories and colimit preserving functors, \cite[%
Def.5.2.7.2, 
Def.5.5.3.1, 
Prop.5.5.4.20
]{HTT}. 
}%
\[ \MH_k^{\nc}[\Lambda^{-1}] \stackrel{\sim}{\to} \logH_k
\]
where $\Lambda$ is the class of (images of) morphisms of the form 
$$(\hX, n \mX) \to (\hX, \mX)$$ for $(\hX, \mX) \in \PSm_k^{\nc}$, Def.\ref{defi:PSmnc} and $n \geq 1$. In particular, there is a fully faithful embedding
\[ \MH_k^{\nc} \supseteq \logH_k \]
admitting a left adjoint. 
\end{theo}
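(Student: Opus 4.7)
The plan is to exhibit $\logH_k$ as a reflective subcategory of $\MH_k^{\nc}$ whose reflector is the $\Lambda$-localisation. The canonical functor $L\colon \MH_k^{\nc}\to\logH_k$ sends $(\hX,\mX)$ to the log scheme $\hX$ equipped with the compactifying log structure along $\hX\setminus|\mX|\hookrightarrow \hX$. Since this assignment depends only on the support of $\mX$, $L$ manifestly inverts every morphism in $\Lambda$ and hence descends to $\bar L\colon \MH_k^{\nc}[\Lambda^{-1}]\to\logH_k$. I would construct a candidate right adjoint $\iota\colon\logH_k\to\MH_k^{\nc}$ by sending a log smooth scheme, modelled by a reduced SNC boundary $D\subset X$, to the modulus pair $(X,D)\in\PSm_k^{\nc}$. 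For $\iota$ to descend to homotopy categories one needs matching of the interval objects (the pairs $(\PP^1,\infty)$ and $(\PP^1,\partial)$ correspond) and matching of the Nisnevich-type topologies; both should follow from the presentations of $\MH_k^{\nc}$ and $\logH_k$ set up earlier in the paper.

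With $(L,\iota)$ adjoint, one checks directly that $L\iota\simeq\id_{\logH_k}$, so $\iota$ is fully faithful and $\bar L$ is essentially surjective. Full faithfulness of $\bar L$ amounts to showing that the $L$-local equivalences and the $\Lambda$-local equivalences coincide, which in turn reduces to verifying that for every $(\hX,\mX)\in\PSm_k^{\nc}$ the unit
\[ c\colon (\hX,\mX)\longrightarrow \iota L(\hX,\mX) = (\hX,\mX_{\red}) \]
becomes an equivalence in $\MH_k^{\nc}[\Lambda^{-1}]$, i.e.\ lies in the saturation $\bar\Lambda$ of $\Lambda$ under $2$-out-of-$3$, composition and homotopy colimits.

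The last claim is the main obstacle. I would choose $N\geq 1$ with $N\mX_{\red}\geq \mX$ and factor $c$ as
\[ (\hX,N\mX_{\red})\xrightarrow{\;a\;}(\hX,\mX)\xrightarrow{\;c\;}(\hX,\mX_{\red}); \]
the composite lies in $\Lambda$, so by $2$-out-of-$3$ it suffices to show $a\in\bar\Lambda$. I would argue this by induction on the number of distinct coefficients in the decomposition $\mX=\sum n_i D_i$: the base case (all coefficients equal) is literally a morphism in $\Lambda$, and the inductive step would use a sequence of admissible blow-ups centred on intersections of components of different multiplicity, together with Mayer--Vietoris descent in $\MH_k^{\nc}$, to reduce to the base case on the pieces. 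The delicate point is verifying that the blow-up and cover squares arising in this decomposition are genuine Nisnevich squares in $\PSm_k^{\nc}$, and that blow-ups along smooth centres in the boundary are already equivalences in $\MH_k^{\nc}$; these descent properties are the heart of the argument.
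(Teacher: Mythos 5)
Your high-level shape (exhibit $\logH_k$ as a reflective subcategory of $\MH_k^{\nc}$ whose reflector is $\Lambda$-localisation) matches the theorem's statement, but the proposal misses the one piece of genuine mathematical content and replaces it with machinery aimed at a non-problem. The missing content is the comparison of hom-sets for compactifying log structures: by \cite[Prop.III.1.6.2]{Ogu18} (Lemma~\ref{lemm:logloglemma}), a morphism $\log\sX \to \log\sY$ of log schemes is a morphism $f\colon \hX\to\hY$ with $n\mX \supseteq \mY|_{\hX}$ for \emph{some} $n$, so that
\[ \hom_{\SmlSm_k}(\log\sX,\log\sY)\;=\;\colim_{n\geq 1}\hom_{\PSm_k^{\nc}}(\sX^{(n)},\sY), \]
which is exactly the calculus-of-fractions description of $\PSm_k^{\nc}[\Lambda^{-1}]$ (Proposition~\ref{prop:PSmLambdaSmlSm}). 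Because of this, your candidate right adjoint $\iota\colon \log(X,D)\mapsto (X,D)$ is \emph{not} right adjoint to $L$: the genuine right adjoint is restriction along $\log$, whose value on a representable log scheme is the ind-representable presheaf $\sY\mapsto \colim_n\hom(\sY^{(n)},(X,D))$, not the representable $(X,D_{\red})$. Concretely, a map $f\colon\hY\to X$ carrying $\iY$ into $X\setminus D$ satisfies $n\mY\supseteq f^*D$ for some $n$ but need not satisfy $\mY\supseteq f^*D_{\red}$ when $f$ is ramified along $D$; this discrepancy is the whole point of the theorem, and no manipulation internal to $\MH_k^{\nc}$ produces it. Relatedly, your reduction of full faithfulness of $\bar L$ to the unit at representables is unjustified: $\iota L$ does not preserve colimits, so the class of objects on which the unit is a $\Lambda$-equivalence is not obviously closed under colimits. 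The paper sidesteps both issues by proving the equivalence at the level of sites first and then transporting the topologies and the classes $\Sigma$, $\CI$ across (Propositions~\ref{prop:PSmLambdaSmlSm}, \ref{prop:NisSame}, \ref{prop:MSmSmlSmSigma}), so that both homotopy categories become accessible localisations of the same presheaf category at corresponding classes of maps.

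Your ``main obstacle'' is also a non-problem. The morphism $(\hX,\mX)\to(\hX,\mX_{\red})$ is inverted in $\MH_k^{\nc}[\Lambda^{-1}]$ by two-out-of-six: choosing $N$ with $N\mX_{\red}\supseteq\mX$, in the chain
\[ (\hX,N\mX)\to(\hX,N\mX_{\red})\to(\hX,\mX)\to(\hX,\mX_{\red}) \]
the two composites of consecutive arrows both lie in $\Lambda$, so all three arrows are inverted. No induction on coefficients, no blow-ups, no Mayer--Vietoris. The proposed induction is moreover suspect on its own terms (blow-ups are not \'etale, so the blow-up squares you describe are not Nisnevich squares; smooth-blowup invariance is available only because it is imposed by definition of $\MH_k^{\nc}$), and even granting it, it would re-prove a triviality while leaving the actual comparison of morphism sets untouched.
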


Theorem~\ref{theo:one} is a shadow of a stronger equivalence. We write $\PSm^{\nc}_k$ and $\SmlSm_k$ for the categories of pairs with normal crossings support and smooth log smooth schemes respectively, Def.\ref{defi:PSmnc}, Def.\ref{defi:SmlSmk}. So $\PSm^{\nc}_k$ generates $\MH_k^{\nc}$ and $\SmlSm_k$ generates $\logH_k$. The objects of $\PSm^{\nc}_k$ are pairs $\sX = (\hX, \mX)$ consisting of a smooth $k$-scheme $\hX$ and an effective Cartier divisor $\mX \subseteq \hX$ with strict normal crossing support, Def.\ref{defi:sncSupport}. 
Morphisms $\sX \to \sY$ are morphisms of schemes $f: \hX \to \hY$ such that $\mX \supseteq \mY|_{\hX}$. We write $\iX = \hX \setminus \mX$ for the \emph{interior} and 
\[ \log\sX = (\hX, \OO_X \cap \OO_{\iX}^*) \]
for the associated log scheme.

\begin{prop}[{Proposition~\ref{prop:PSmLambdaSmlSm}}] \label{prop:introSchEquiv}
The functor $\sX \mapsto \log \sX$ induces an equivalence
\[ \PSm^{\nc}_k[\Lambda^{-1}] \stackrel{\sim}{\to} \SmlSm_k \]
where $\Lambda$ is the class of morphisms of the form $(\hX, n \mX) \to (\hX, \mX)$ and this localisation happens in the category of small $\infty$-categories, cf.~Def.\ref{defi:catloc}. 
\end{prop}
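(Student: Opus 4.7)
\emph{Proof plan.} The approach is to verify the universal property of the localisation by identifying the morphism sets on both sides. The first observation is that $\log : \PSm^{\nc}_k \to \SmlSm_k$ sends every arrow $\lambda \in \Lambda$ to an identity: the compactifying log structure of $\iX = \hX \setminus \operatorname{supp}(\mX)$ depends only on $\operatorname{supp}(\mX)$, and $\operatorname{supp}(n\mX) = \operatorname{supp}(\mX)$, so $\log(\hX, n\mX) = \log(\hX, \mX)$ on the nose, with $\lambda$ becoming the identity. Hence $\log$ factors through a functor $\bar L : \PSm^{\nc}_k[\Lambda^{-1}] \to \SmlSm_k$, and the remaining task is to show that $\bar L$ is an equivalence.

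The core step is to equip $(\PSm^{\nc}_k, \Lambda)$ with a right calculus of fractions. The right Ore condition holds: given $f : \sY \to \sX$ in $\PSm^{\nc}_k$ and $\lambda_n : (\hX, n\mX) \to \sX$ in $\Lambda$, taking $\mu_n : (\hY, n\mY) \to \sY$ in $\Lambda$ together with $g : (\hY, n\mY) \to (\hX, n\mX)$ having the same underlying scheme map as $f$ fills in the required square, since $\mY \geq f^*\mX$ implies $n\mY \geq n f^*\mX$. The cancellation axiom is automatic, because morphisms in $\PSm^{\nc}_k$ are determined by their underlying scheme map and every $\lambda \in \Lambda$ has identity underlying map. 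Under this calculus, morphisms $\sX \to \sY$ in the Gabriel--Zisman localisation are equivalence classes of backwards roofs $\sX \xleftarrow{\lambda_n} (\hX, n\mX) \xrightarrow{g} \sY$, and such a roof is determined up to equivalence by its underlying scheme map $g : \hX \to \hY$; it exists for some $n \geq 1$ iff $f^*\mY$ is supported on $\operatorname{supp}(\mX)$, equivalently $g(\iX) \subseteq \iY$. On the other side, for compactifying log structures, morphisms $\log\sX \to \log\sY$ in $\SmlSm_k$ are canonically the scheme morphisms $\hX \to \hY$ satisfying $g(\iX) \subseteq \iY$. The two sets coincide, giving fully faithfulness at the 1-categorical level. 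Essential surjectivity follows because every smooth log smooth $k$-scheme $Y$ admits a presentation $Y = \log(\hY, \mY)$ with $(\hY, \mY) \in \PSm^{\nc}_k$, e.g.\ by taking $\mY$ to be the reduced snc divisor supporting the nontrivial part of the log structure.

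The hard part is bridging from this 1-categorical computation to the $\infty$-categorical statement: the proposition asserts an equivalence in small $\infty$-categories, so a priori $\PSm^{\nc}_k[\Lambda^{-1}]$ might carry nontrivial higher mapping spaces. To handle this I would invoke the classical result that an ordinary category equipped with a calculus of fractions has its $\infty$-localisation equivalent to its Gabriel--Zisman 1-localisation (Dwyer--Kan; see e.g.\ Cisinski, \emph{Higher Categories and Homotopical Algebra}, \S 7.2). Granting this identification, the 1-categorical computation above produces the required equivalence of $\infty$-categories.
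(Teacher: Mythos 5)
Your proposal is correct and follows essentially the same route as the paper: establish a right calculus of fractions for $\Lambda$ (Ore condition via the natural transformation $(-)^{(n)} \to \mathrm{id}$, cancellation via faithfulness of the underlying-scheme functor), identify morphisms in the localisation with scheme maps satisfying $n\mX \supseteq \mY|_{\hX}$ for some $n$, match these against the characterisation of morphisms of compactifying log schemes as maps with $g(\iX) \subseteq \iY$ (the paper's Lemma~\ref{lemm:logloglemma}, via Ogus III.1.6.2), and invoke Dwyer--Kan to upgrade the Gabriel--Zisman localisation to the $\infty$-categorical one. The only cosmetic difference is that you describe morphisms via roofs where the paper writes the equivalent colimit formula $\colim_{n}\hom(\sX^{(n)},\sY)$.
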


\begin{rema}
A qcqs base version of Theorem~\ref{theo:one} is likely to be true because a qcqs base version of Propostion~\ref{prop:introSchEquiv} holds, cf.~Remark~\ref{rema:qcbase}.
\end{rema}

\begin{rema}
The motives version, $\MDM_k[\Lambda^{-1}] \stackrel{?}{=} \logDM_k$ of Theorem~\ref{theo:one} may be true. However, the cycles version of Propostion~\ref{prop:introSchEquiv} is false, cf.Example~\ref{exam:gmgm}.
\end{rema}

\begin{rema}
In case it's not obvious, we point out that the same techniques show that Morel--Voevodsky's unstable homotopy category can also be as the localisation
\[ \MH_k[I^{-1}] \stackrel{\sim}{\to} \logH_k[(\log I)^{-1}] \stackrel{\sim}{\to} H_k \]
where $I$ is the class of morphisms of the form $(\hX \setminus \mX, \varnothing) \to (\hX, \mX)$.
\end{rema}

A consequence of Theorem~\ref{theo:one} is a scheme theoretic description of log homotopy types. That is, a description which does not explicitly use any log geometry. This is not so surprising since the log schemes $\log \sX$ in question are completely determined by the total space $\hX$ and the interior $\iX$, cf.\cite[Prop.III.1.6.2]{Ogu18}.

\begin{coro}
There is an identification
\[ \logH_k \cong \PSh(\PSm_k^{\nc}, \sS) \left [\bigl (\Nis \cup \Sigma \cup \CI \cup \Lambda \bigr)^{-1} \right] \]
where
\begin{enumerate}
 \item $\PSh(-, \sS)$ means presheaves of spaces.
 \item $\Nis$ is the class of morphisms defining Nisnevich excision, Rem.\ref{rema:NisExcis}.
 \item $\Sigma$ is the class of compositions of smooth blowups, Def.\ref{defi:sbu}.
 \item $\CI$ is the class of morphisms of the form $(\hX \times \PP^1, \mX + \{\infty\})  \to (\hX, \mX)$.
 \item $\Lambda$ is the class of morphisms of the form $(\hX, n \mX) \to (\hX, \mX)$ for $(\hX, \mX) \in \PSm_k^{\nc}$.
\end{enumerate}
\end{coro}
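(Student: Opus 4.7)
The plan is to reduce the corollary to Theorem~\ref{theo:one} by unfolding the definition of $\MH_k^{\nc}$. By construction (to be recalled from the body of the paper), the unstable modulus homotopy category $\MH_k^{\nc}$ is the presentable $\infty$-category obtained from presheaves of spaces on $\PSm_k^{\nc}$ by inverting the three classes $\Nis$, $\Sigma$, and $\CI$; that is,
\[ \MH_k^{\nc} \;\simeq\; \PSh(\PSm_k^{\nc}, \sS)\bigl[(\Nis \cup \Sigma \cup \CI)^{-1}\bigr]. \]
So the first step is to quote this definition, and confirm that the three classes listed in items (2)--(4) match exactly the classes used to define $\MH_k^{\nc}$.

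The second step is to apply Theorem~\ref{theo:one}, which yields $\logH_k \simeq \MH_k^{\nc}[\Lambda^{-1}]$. Plugging in the presentation above,
\[ \logH_k \;\simeq\; \PSh(\PSm_k^{\nc}, \sS)\bigl[(\Nis \cup \Sigma \cup \CI)^{-1}\bigr]\bigl[\Lambda^{-1}\bigr]. \]
The third and final step is a formal fact about Bousfield localisation in $\PrL$: iterated localisation at two classes $S$ and $T$ agrees with localisation at $S \cup T$, since both are characterised by the same universal property (colimit preserving functors which invert all morphisms in $S \cup T$). This collapses the two-step localisation above into a single localisation at $\Nis \cup \Sigma \cup \CI \cup \Lambda$, giving the stated identification.

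There is no real obstacle here: the corollary is essentially a notational repackaging of Theorem~\ref{theo:one}. The only thing that needs care is making sure the base $\infty$-category of presheaves in the definition of $\MH_k^{\nc}$ is the same as the one appearing in the corollary, namely $\PSh(\PSm_k^{\nc}, \sS)$ with $\PSm_k^{\nc}$ as in Def.\ref{defi:PSmnc}, so that the universal property of the localisation is indexed over the same generators. Once this is in place, the proof reduces to invoking Theorem~\ref{theo:one} and the transitivity of localisation in $\PrL$.
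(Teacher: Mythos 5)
Your proposal is correct and matches the argument the paper intends: unfold $\MH_k^{\nc} = \Shv_{\Nis}(\PSm_k^{\nc},\sS)[\Sigma^{-1}][\CI^{-1}]$ (Def.~5.2 together with Rem.~5.3, noting that Nisnevich sheafification is itself the localisation at the excision morphisms), apply Theorem~1.1, and collapse the iterated localisations in $\PrL$ into a single localisation at the union. This is essentially the same route as the paper, which states the corollary as an immediate consequence of exactly these identifications.
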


As an afterthought, we show how the stable category of modulus pairs with $\QQ$-divisors $\mmH_k$ from \cite{KMS25} can be built directly from our unstable, integral divisor category $\MH_k^{\nc}$.

Recall that the abelian group of rational numbers can be expressed as the union $\QQ = \cup_{n \in \NN} \tfrac{1}{n}\ZZ$, or equivalently, as the colimit $\QQ = \colim_{n \in \NN} \ZZ$ where the transition morphism $\ZZ \to \ZZ$ associated to the indexing arrow $n {\to} nm$ is multiplication by $m$.

\begin{theo}[{Theorem~\ref{prop:mH}}] \label{theo:1.6}
Let $\mmH_k$ be the category from \cite{KMS25} recalled in Section~\ref{sec:ratDiv}, and $\CI(n) \subseteq \MH_k$ the class of morphisms of the form $(\hX \times \PP^1, \mX + n\{\infty\})  \to (\hX, \mX)$, Def.~\ref{defi:CIn}. Then there is a canonical equivalance
\[ 
\mmH_k
\cong
\underset{n \in \NN}{\colim} \MH_k^{\nc}[\CI(n)^{-1}]
\]
Here, the transition morphism in the colimit associated to $n {\to} nm$ is induced by the functor $(-)^{(m)}: \PSm_k^{\nc} \to \PSm_k^{\nc}; (\hX, \mX) \mapsto (\hX, m \mX)$. Moreover, if $k$ has resolution of singularities, then the canonical functor is fully faithful;
\[ \MH_k^{\nc} \subseteq \mmH_k. \]
\end{theo}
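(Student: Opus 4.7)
The blueprint is the identification $\QQ = \colim_n \tfrac{1}{n}\ZZ$: interpret an integer-divisor pair $(\hX, \mX)$ at stage $n$ as the $\QQ$-divisor pair $(\hX, \tfrac{1}{n}\mX)$. Under this interpretation the transition functor $(-)^{(m)}: (\hX, \mX) \mapsto (\hX, m\mX)$ acts as the identity on $\QQ$-divisors, since $\tfrac{1}{nm}(m\mX) = \tfrac{1}{n}\mX$. The plan is therefore to build a comparison functor stage-by-stage, verify it descends to the correct localisations, and then show it is an equivalence.

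For each $n \geq 1$ define $\tilde F_n: \PSm_k^{\nc} \to \mmH_k$ on objects by $(\hX, \mX) \mapsto (\hX, \tfrac{1}{n}\mX)$; morphisms carry over because the admissibility condition $\mX \supseteq \mY|_{\hX}$ scales to $\tfrac{1}{n}\mX \supseteq \tfrac{1}{n}\mY|_{\hX}$. The key check is that the localised classes at stage $n$ all become invertible in $\mmH_k$: the $\Nis$ and smooth blowup localisations are transparently preserved, while a $\CI(n)$-morphism $(\hX\times\PP^1, \mX + n\{\infty\}) \to (\hX, \mX)$ is sent to the standard $\CI$-morphism $(\hX\times\PP^1, \tfrac{1}{n}\mX + \{\infty\}) \to (\hX, \tfrac{1}{n}\mX)$ in $\mmH_k$, and the $\CI$-morphisms of $\MH_k^{\nc}$ are sent to the ``fractional'' morphisms $(\hX\times\PP^1, \tfrac{1}{n}\mX + \tfrac{1}{n}\{\infty\}) \to (\hX, \tfrac{1}{n}\mX)$, which are inverted in $\mmH_k$ per the conventions of \cite{KMS25}. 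The compatibility $\tilde F_{nm} \circ (-)^{(m)} = \tilde F_n$ is immediate, so the $\tilde F_n$ assemble into $F: \colim_n \MH_k^{\nc}[\CI(n)^{-1}] \to \mmH_k$.

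Essential surjectivity of $F$ is straightforward: a generator $(\hX, D) \in \mmH_k$ with $D$ a $\QQ$-divisor of normal crossings support can be written $D = \tfrac{1}{n}\mX$ for $\mX = nD$ integer, so $\tilde F_n(\hX, \mX) \simeq (\hX, D)$. The main technical point is full faithfulness. Mapping spaces in the colimit are filtered colimits of $\Map_{\MH_k^{\nc}[\CI(n)^{-1}]}$ indexed by the stages, while those in $\mmH_k$ are computed in a single presentation on $\QQ$-divisor pairs. These are identified by presenting $\mmH_k$ itself as a localisation of presheaves on $\QQ$-divisor pairs (following \cite{KMS25}) and observing that this category of $\QQ$-divisor pairs is the filtered colimit of the integer-divisor categories at each stage, with the localisation classes matching stage-by-stage; the conclusion follows because localisation in $\PrL$ commutes with filtered colimits.

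For the fully faithful embedding under resolution of singularities, the $n = 1$ stage of the colimit is $\MH_k^{\nc}[\CI^{-1}] = \MH_k^{\nc}$ itself, so it suffices to show each transition functor $(-)^{(m)}: \MH_k^{\nc}[\CI(n)^{-1}] \to \MH_k^{\nc}[\CI(nm)^{-1}]$ is fully faithful. This is the delicate step: resolution of singularities is used to lift the admissibility condition for morphisms $(\hX, m\mX) \to (\hY, m\mY)$ back to $(\hX, \mX) \to (\hY, \mY)$ via smooth blowups of the normal crossings support, so that the further $\CI(m)$-localisation exactly cancels the scaling by $m$. I expect this to be the main technical obstacle, and the only place where the resolution hypothesis is really used.
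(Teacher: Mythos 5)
The first half of your proposal---the identification $\mmH_k \cong \colim_n \MH_k^{\nc}[\CI_n^{-1}]$---is in substance the paper's argument: both sides are presented stage-by-stage on $\PSm_k^{\nc}$ with matching localisation classes, and the paper verifies this by checking that functors out of either side are the same compatible families in $\lim_n \Fun(\PSm_k^{\nc}, \sE)$. Your phrasing via ``localisation in $\PrL$ commutes with filtered colimits'' is the same idea, and your explicit $\QQ$-divisor dictionary $(\hX,\mX) \mapsto (\hX,\tfrac1n\mX)$ is a reasonable way to package it. (Both you and the paper gloss over the interaction between the class $\CI_1$ already inverted in $\MH_k^{\nc}$ and the class $\CI_n$ inverted at stage $n$; your appeal to ``fractional'' cube morphisms being inverted ``per the conventions of \cite{KMS25}'' is doing real work there and would need to be checked.)

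The genuine gap is in the second half. You correctly reduce full faithfulness of $\MH_k^{\nc} \to \mmH_k$ to full faithfulness of each transition functor $\MH_k^{\nc}[\CI_n^{-1}] \to \MH_k^{\nc}[\CI_{nm}^{-1}]$, but you then defer the argument (``I expect this to be the main technical obstacle'') and the sketch you offer does not engage with the actual difficulty. Lifting the admissibility condition from $(\hX, m\mX) \to (\hY, m\mY)$ back to $(\hX,\mX) \to (\hY,\mY)$ is trivial---$m\mX \supseteq m\mY|_{\hX}$ is literally equivalent to $\mX \supseteq \mY|_{\hX}$---so $(-)^{(m)}$ is fully faithful on $\PSm_k^{\nc}$ with no blowups and no resolution hypothesis. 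The real issue is at the level of the localised $\infty$-categories: one must show that localising presheaves at $\Nis$ and $\CI_{nm}$ does not create new maps between objects in the essential image of the fully faithful left Kan extension $\mu^*$ along $\mu = (-)^{(m)}$. The paper does this by proving $F = \mu_* L_{\bigcirc}\, \mu^* F$ for $\bigcirc = \Nis, \CI_{nm}$, using explicit formulas for the localisation functors: the \v{C}ech/transfinite description of sheafification, and Hoyois's colimit formula $L_{\CI_{nm}}F(\sX) = \colim_{\sY \to \sX} F(\sY)$ over compositions of $\CI_{nm}$-morphisms. Resolution of singularities enters precisely here, and not where you place it: it is used to replace $\PSm_k^{\nc}[\Sigma^{-1}]$ by $\MSm_k$ so that $\CI_{nm}$ becomes closed under categorical pullback and the Hoyois formula applies. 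Without supplying an argument of this kind, the fully-faithfulness claim---which is the substantive assertion of the theorem---remains unproved in your proposal.
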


It seems likely that many results about $\mmH_k$ (and therefore the stable version $\mmSH_k = \Sp(\mmH_k)$) actually come from results about $\MH_k^{\nc}$ via Theorem~\ref{theo:1.6}.

The proof of 
Theorem~\ref{theo:one} is straightforward and can be understood from the following outline. The strategy, essentially, is to insert the definitions.

\subsection{Outline} In Section~\ref{sec:schemes} we recall the categories $\PSm^{\nc}_k$ and $\SmlSm_k$ of pairs with normal crossings support and smooth log smooth schemes respectively, Def.\ref{defi:PSmnc}, Def.\ref{defi:SmlSmk}. We observe that the canonical comparison functor induces an equivalence
\[ \PSm^{\nc}_k[\Lambda^{-1}] \stackrel{\sim}{\to} \SmlSm_k \]
from the localisation along the class $\Lambda$ of morphisms of the form $(\hX, n \mX) \to (\hX, \mX)$, Prop.\ref{prop:PSmLambdaSmlSm}. %

In Section~\ref{sec:Nis} we recall the Nisnevich topology, Def.\ref{defi:PNis}, and strict Nisnevich topology, Def.\ref{defi:logNis}, on the categories $\PSm^{\nc}_k$ and $\SmlSm_k$ respectively, the fact that descent can be detected by excision, Rem.\ref{rema:excisionDescent}, and the corollary that the corresponding localised categories are equivalent, Prop.\ref{prop:NisSame},
\[ 
\Shv_{\Nis}(\PSm^{\nc}_k)[\Lambda^{-1}]
\stackrel{\sim}{\to} 
\Shv_{\sNis}(\SmlSm_k) 
\]

In Section~\ref{sec:adb} we recall the notion of smooth blowup in $\PSm_k$, Def.\ref{defi:sbu}, and in $\SmlSm_k$, Def.\ref{defi:logsbu}, and observe that the comparison functor between the localised categories is an equivalence, Prop.\ref{prop:MSmSmlSmSigma}.

\[ 
\Shv_{\Nis}(\PSm_k)[\Sigma^{-1}][\Lambda^{-1}] 
\stackrel{\sim}{\to} 
\Shv_{\sNis}(\SmlSm_k)[\Sigma^{-1}].
\]
In Section~\ref{sec:homCat} we state the definitions of the homotopy categories as 
\begin{align}
\MH_k^{\nc} &= \Shv_{\Nis}(\PSm_k^{\nc})[\Sigma^{-1}, \CI^{-1}]  \label{equa:MHkintro} \\
\logH_k &= \Shv_{\sNis}(\SmlSm_k)[\Sigma^{-1}, (\log\CI)^{-1}] \label{equa:logHintro}
\end{align}
and observe that there is an equivalence
\[ \MH_k^{\nc}[\Lambda^{-1}] \stackrel{\sim}{\to} \logH_k. \]


In Section~\ref{sec:ratDiv} we prove the comparison with $\QQ$-divisors, Thm.\ref{theo:1.6} ($=$ Thm.\ref{prop:mH}).

In Section~\ref{sec:transfers} we discuss transfers and show that the cycles version of Propostion~\ref{prop:introSchEquiv} is false, Exam.\ref{exam:gmgm}.

\emph{Related work.} Koizumi-Miyazaki-Saito prove a stable version of Theorem~\ref{theo:one} with $\QQ$-divisors in \cite{KMS25}. For more about this see Section~\ref{sec:ratDiv}.

\emph{Acknowledgements.} I thank Federico Binda and Doosung Park for answering many questions about log motives. I thank Bruno Kahn and Shuji Saito for asking questions  about an earlier version of this note. I apologise to Miyazaki who, due to poor communication, wrote up a version of Theorem~\ref{theo:one} independently.

\section{Log schemes are modulus pairs with completed modulus} \label{sec:schemes}

Given a morphism of schemes $T \to S$ and a $S$-scheme $X$, we will use the notational convenience
\[ X|_T := T \times_S X. \]

The log theory is developed using strict normal crossing divisors, cf. \cite[Def.7.2.1, Lem.A.5.10]{BPO22}. Let us recall what this means.

\begin{defi} \label{defi:sncSupport}
\label{defi:snc}
We will say that a closed subscheme $\mX \subseteq \hX$ has strict normal crossing support if there exists a Zariski covering $\{U_i \to X\}_{i \in I}$ and étale morphisms $θ_i: U_i \to \AA^d$ such that 
$\mX|_{U_i} = \mA_{e_1, \dots, e_d}|_{U_i}$ where $\mA_{e_1, \dots, e_d} = \Spec(k[t_1, \dots, t_d] / \langle t_1^{e_1}\dots t_d^{e_d} \rangle)$ with all $e_i \geq 0$.
\end{defi}


\begin{defi} \label{defi:PSmnc}
Let $k$ be a field. The category 
\[ \PSm^{\nc}_k \]
has as objects pairs $\sX = (\hX, \mX)$ such that $\hX$ is a smooth $k$-scheme and $\mX \subseteq \hX$ is an effective Cartier divisor with
normal crossing support.
Morphisms $\sX \to \sY$ are morphisms of schemes $f: \hX \to \hY$ such that $\mX \supseteq \mY|_{\hX}$.
\end{defi}

\begin{exam}
One writes $\bcube$ for the pair $(\PP^1, \{\infty\}) \in \PSm^{\nc}_k$. More generally, for any pair $\sX = (\hX, \mX)$ one writes $\bcube_{\sX}$ for the pair $(\hX \times \PP^1, \mX {\times} \PP^1 + \hX {\times} \{\infty\})$.
\end{exam}

\begin{rema}
As mentioned in the introduction, the divisor $\mX$ should be thought of as a bound on the ramification locus (algebraic setting) or singularity locus (holomorphic setting) of an as yet unspecified collection of objects over $\hX$. 

The direction of the inequality aligns with the canonical inclusion of line bundles $\OO(\mX) \supseteq \OO(\mY|_{\hX})$ where $\OO(\mX) = \mathcal{H}om_{\OO_{\hX}}(\sI_{\mX}, \OO_{\hX})$ is the sheaf of meromorphic functions on $\hX$ with poles bounded by $\mX$. 
That is, this choice of inequality makes 
\[ \sX \mapsto \Gamma(\hX, \OO(\mX)) \]
a presheaf on $\PSm_k$.

\end{rema}

\begin{defi}[{\cite[Def.III.1.1.1]{Ogu18}, cf.\cite[Def.A.2.1, Def.A.2.5]{BPO22}}]
A \emph{log scheme} $(X, α)$ is a scheme $X$ equipped with a Zariski sheaf of additive commutative monoids $M_X$ and a morphism $α: M_X \to \OO_X$ towards the sheaf of multiplicative monoids $\OO_X$ such that $α^{-1}\OO_X^* \to \OO_X^*$ is an isomorphism.
A morphism of log schemes $(X, α) \to (X', α')$ a morphism of schemes $f: X \to X'$ together with a morphism of sheaves of monoids $M_{X'} \to f_*M_X$ compatible with the $α$'s.
\end{defi}

\begin{rema}
Sections of $\sM_X$ are thought of as formal logarithms of functions on $X$ and $α$ as an exponential function, cf. $\exp({m + n}) = \exp(m)\exp(n)$. 
\end{rema}

%
%

\begin{exam} \label{exam:logXD}
Given a scheme $X$ and closed subscheme $D \subseteq X$ with dense open complement $j:U\to X$, we write 
\[ \log(X, D) := (X, \OO_X \cap j_*\OO_U^*) \]
for the associated log scheme. Lot structures occurring in this way are called \emph{compactifying}, \cite[\S III.1.6]{Ogu18}.
\end{exam}

Compactifying log schemes are particularly accessible. 

\begin{lemm}[Cf. {\cite[Prop.III.1.6.2]{Ogu18}}] \label{lemm:logloglemma}
Suppose $p: X \to X'$ is morphism of schemes equipped with nowhere dense closed subschemes $D \subseteq X$, $D' \subseteq X'$ and open complements $U, U'$.
\begin{enumerate}
 \item The canonical morphism induces a bijection
\begin{equation} \notag 
\hom\!\!\!\!\!\!_{\substack{\textrm{log}\\\textrm{schemes}}}\!\! \biggl (\log(X, D), \log(X', D') \biggr ) \stackrel{\sim}{\to} \biggl \{ f \in \hom_{schemes}(X, X') \ |\ f(U) \subseteq U' \biggr \}.
\end{equation}

 \item \label{lemm:logloglemma3} If $D, D'$ are locally principal and $X$ is qc, then the two sets in the previous display equation 
are identified with
 \[ \biggl \{ f \in \hom_{schemes}(X, X') \ |\ \sI_D^n \subseteq \sI_{D'}\OO_X \textrm{ for some }n \in \NN \biggr \}.
\]
\end{enumerate}
\end{lemm}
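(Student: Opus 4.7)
The plan is to treat the two parts separately. For (1), I would unpack the definition of a morphism of log schemes and exploit the fact that the compactifying log structure $M = \OO \cap j_*\OO_U^*$ is realised as a subsheaf of $\OO$. Thus the log morphism data $f^{-1}M_{X'} \to M_X$, if it exists, is forced to be the restriction of $f^*$, so a log morphism over an underlying scheme morphism $f$ is unique and exists iff $f^*$ carries sections that are units on $U'$ to sections that are units on $U$. The forward direction is immediate from $f^{-1}(U') \supseteq U$; the reverse direction, that any $x \in U$ with $f(x) \in D'$ produces a local section of $M_{X'}$ whose pullback lies outside $M_X$, is the content of \cite[Prop.III.1.6.2]{Ogu18}, which I will simply cite rather than reproduce.

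For the easy implication in (2), taking vanishing loci of the inclusion $\sI_D^n \subseteq \sI_{D'}\OO_X$ gives $f^{-1}(D') = V(\sI_{D'}\OO_X) \subseteq V(\sI_D^n) = V(\sI_D) = D$ as subsets of $X$, which rephrases as $f(U) \subseteq U'$.

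For the other implication in (2) the plan is to use quasi-compactness to upgrade a local radical containment into a uniform power. I would cover $X$ by finitely many affine opens $X = \bigcup_{i=1}^r X_i$ on which both $\sI_D$ and $\sI_{D'}\OO_X$ are principal --- this is where local principality of $D,D'$ and quasi-compactness of $X$ are used. Writing $\sI_D|_{X_i} = (g_i)$ and $\sI_{D'}\OO_X|_{X_i} = (h_i)$, the set-theoretic inclusion $f^{-1}(D') \subseteq D$ restricted to $X_i$ reads $V(h_i) \subseteq V(g_i)$, equivalently $g_i \in \sqrt{(h_i)}$, which by definition of the radical yields $g_i^{n_i} \in (h_i)$ for some $n_i \geq 1$. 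Setting $n = \max_i n_i$ then gives $g_i^n \in (h_i)$ on each $X_i$, hence the required global containment $\sI_D^n \subseteq \sI_{D'}\OO_X$.

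The main obstacle really lives in (1), where one needs the general structure theory of compactifying log structures both for uniqueness of the log upgrade and for producing the obstructing local section in the converse direction; since this is \cite[Prop.III.1.6.2]{Ogu18} I plan to quote it. Part (2) is then elementary commutative algebra, and the only subtlety is keeping track of the difference between the set-theoretic preimage $f^{-1}(D')$ and the scheme-theoretic pullback $V(\sI_{D'}\OO_X)$ (which agree as sets), and using quasi-compactness to choose a single exponent $n$ that works on every piece of the finite cover.
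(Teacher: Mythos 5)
Your proposal is correct and follows essentially the same route as the paper: part (1) is delegated to \cite[Prop.III.1.6.2]{Ogu18} (with the uniqueness/forcing of the log-structure map via the subsheaf description, which the paper also implicitly relies on), and part (2) reduces to the equivalence between the set-theoretic containment $f^{-1}(D')\subseteq D$ and the radical containment $\sI_D\subseteq\sqrt{\sI_{D'}\OO_X}$, with quasi-compactness and local (finite) generation of $\sI_D$ used to extract a single exponent $n$. Your finite-affine-cover-and-take-the-maximum argument is just a more explicit spelling-out of the paper's one-line chain of equivalences.
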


\begin{proof}
The first claim is a special case of \cite[Prop.III.1.6.2]{Ogu18}. One can also prove it by hand without much trouble. For the second claim, it suffices to show that $U \subseteq U'|_{X'}$ if and only if $\sI_D^n \subseteq \sI_{D'}\OO_X$ for some $n$. Since $X$ is qc and $\sI_D$ locally finitely generated, the condition $\lceil \sI_D^n \subseteq \sI_{D'}\OO_X; \exists n \rfloor$ is equivalent to the condition $\lceil \sI_D \subseteq \sqrt{\sI_{D'}\OO_X} \rfloor$, which is equivalent to the condition $\lceil D \supseteq (D'|_{X})_{\red} \rfloor$ which is equivalent to the condition $\lceil U \subseteq U'|_X \rfloor$.
\end{proof}

\begin{defi}[{cf.\cite[Lem.A.5.10]{BPO22}}] \label{defi:SmlSmk}
The category 
\[ \SmlSm_k \]
has as objects log schemes of the form $\log(X, D)$, Exam.\ref{exam:logXD}, such that $(X, D) \in \PSm^{\nc}_k$. It is a full subcategory of the category of log $k$-schemes.
\end{defi}

For $n \geq 1$ consider the functor
\begin{align} \label{equa:nPower}
(-)^{(n)}: \PSm^{\nc}_k &\to \PSm^{\nc}_k;  \notag \\
(\hX, \mX) &\mapsto (\hX, n\mX)
\end{align}
equipped with its canonical natural transformation $\sX^{(n)} \to \sX$, where we consider $\mX$ as a divisor. That is, in terms of sheaves of ideals, $\mathcal{I}_{n\mX} = \mathcal{I}_{\mX}^n$.

\begin{prop} \label{prop:PSmLambdaSmlSm}
The canonical morphism
\[ \log: \PSm^{\nc}_k \to \SmlSm_k \]
identifies $\SmlSm_k$ with the categorical localisation
\begin{equation} \label{equa:PSmLsmLsm}
\PSm^{\nc}_k[\Lambda^{-1}] \stackrel{\sim}{\to} \SmlSm_k
\end{equation}
where $\Lambda = \{\sX^{(n)} \to \sX\ |\ \sX \in \PSm^{\nc}_k, n \geq 1\}$.
\end{prop}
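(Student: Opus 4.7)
My plan is to verify the universal property of localisation by exhibiting a right calculus of fractions for $\Lambda$ and then matching the resulting morphism sets with those in $\SmlSm_k$ via Lemma~\ref{lemm:logloglemma}.

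Essential surjectivity is immediate since $\SmlSm_k$ is by definition the essential image of $\log$. That $\log$ sends every morphism in $\Lambda$ to an isomorphism is also straightforward: the scheme map underlying $\sX^{(n)} \to \sX$ is $\id_{\hX}$, and the compactifying log structure $\OO_{\hX} \cap j_* \OO_{\iX}^*$ depends only on the open complement $\iX = \hX \setminus \mX$, which is the same for $\mX$ and $n\mX$.

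To compute morphisms in the localisation, I would verify that $\Lambda$ admits a right calculus of fractions. The key observation is that each functor $(-)^{(n)}$ preserves morphisms in $\PSm^{\nc}_k$ (since $\sI_{\mY} \subseteq \sI_{\mX}\OO_{\hY}$ implies $\sI_{\mY}^n \subseteq \sI_{\mX}^n\OO_{\hY}$), so given $s = (\sX^{(n)} \to \sX) \in \Lambda$ and $f: \sY \to \sX$, one completes the Ore square with $f^{(n)}: \sY^{(n)} \to \sX^{(n)}$ and $(\sY^{(n)} \to \sY) \in \Lambda$. The cancellation condition is trivial because morphisms in $\Lambda$ are identities on underlying schemes. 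The standard argument then gives
\[
\hom_{\PSm^{\nc}_k[\Lambda^{-1}]}(\sX, \sY) = \colim_{n \geq 1} \hom_{\PSm^{\nc}_k}(\sX^{(n)}, \sY),
\]
with transition maps induced by precomposition with the $\Lambda$-morphisms $\sX^{(n+1)} \to \sX^{(n)}$.

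Finally, unwinding the defining inequality $n\mX \supseteq \mY|_{\hX}$ as an ideal inclusion $\sI_{\mX}^n \subseteq \sI_{\mY}\OO_{\hX}$, the colimit above matches exactly the description of $\hom_{\SmlSm_k}(\log\sX, \log\sY)$ provided by Lemma~\ref{lemm:logloglemma}(\ref{lemm:logloglemma3}), applicable here since $\mX, \mY$ are Cartier, hence locally principal. The main points to keep straight are the opposing directions of divisor versus ideal inclusions and the bookkeeping needed to check the Ore conditions rigorously; otherwise the verification is routine.
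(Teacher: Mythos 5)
Your proposal is correct and follows essentially the same route as the paper: verify that $\Lambda$ admits a right calculus of fractions, model the localisation by $\colim_{n\geq 1}\hom_{\PSm^{\nc}_k}(\sX^{(n)},\sY)$, and identify this colimit with $\hom_{\SmlSm_k}(\log\sX,\log\sY)$ via Lemma~\ref{lemm:logloglemma}\eqref{lemm:logloglemma3}. The only point worth making explicit (which the paper handles in a remark, citing Dwyer--Kan) is that the calculus-of-fractions $1$-categorical localisation also computes the $\infty$-categorical localisation demanded by Definition~\ref{defi:catloc}.
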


To be precise:

\begin{defi} \label{defi:catloc}
A functor $C \to C'$ between small $\infty$-categories 
is a \emph{categorical localisation at a class of morphisms $\Lambda$ of $C$} 
if given any other small $\infty$-category $D$, the induced morphism of groupoids of functors
\[ \mathrm{Fun}(C[\Lambda^{-1}], D) \to \mathrm{Fun}(C, D) \]
is fully faithful with essential image consisting of those functors which send elements of $\Lambda$ to isomorphisms.
\end{defi}

\begin{rema}
Since the category small 1-categories sits fully faithfully inside the $\infty$-category of small $\infty$-categories, if $C$ and $C'$ are 1-categories, then the $\infty$-categorical version of Definition~\ref{defi:catloc} implies the 1-categorical version. 

Conversely, if $C$ is a small 1-category and $\Lambda$ satisfies a right calculus of fractions, then the 1-categorical $C[\Lambda^{-1}]$ is also the $\infty$-categorical $C[\Lambda^{-1}]$, \cite[\S 7]{DK80}. 

In our case, one can also prove this latter fact by hand by identifying $\PSm_k[\Lambda^{-1}]$ with the essential image under Yoneda in $\Shv_{λ}(\PSm_k)$ where $λ$ is the topology generated by $\Lambda$.
\end{rema}

\begin{proof}
As mentioned in the remark, if $\Lambda$ satisfies a right calculus of fractions, then a model for the $\infty$-categorical localisation is given by the (1-)category which has the same objects $\PSm_k$ and hom sets 
\[ 
\hom_{\PSm_k[\Lambda^{-1}]}(\sX, \sY)
=
\colim_{n \geq 1} \hom_{\PSm^{\nc}_k}(\sX^{(n)}, \sY).
\]
So let's observe that $\Lambda$ satisfies a right calculus of fractions. Certainly, identities are in $\Lambda$ since $\sX^{(1)} = \sX$. The right Ore condition follows from the natural transformation $(-)^{(n)} \to \id$, and right cancellability follows from the fact that that the ``forget the modulus'' functor $\PSm_k \to \Sm_k$; $\sX \mapsto \hX$ is faithful and $\sX^{(n)} \to \sX$ is sent to $\hX = \hX$.

With the model for $\PSm_k[\Lambda^{-1}]$ just described, the functor \eqref{equa:PSmLsmLsm} is essentially surjective by Definition~\ref{defi:SmlSmk}. To show fully faithfulness, it suffices to show that
\begin{equation} \label{equa:colimTo}
\colim_{n \geq 1} \hom_{\PSm^{\nc}_k}(\sX^{(n)}, \sY) \to \hom_{\SmlSm_k}(\log \sX, \log \sY)
\end{equation}
is an isomorphism where the colimit is over the poset $\NN_{\geq 1}$ ordered by divisibility. Equation~\eqref{equa:colimTo} being an isomorphism follows directly from Lemma~\ref{lemm:logloglemma}. Indeed, morphisms $\sX^{(n)} \to \sY$ in $\PSm^{\nc}_k$ are defined as morphisms of schemes $\hX \to \hY$ such that $n\mX \supseteq \mY|_{\hX}$ and by Lemma~\ref{lemm:logloglemma}\eqref{lemm:logloglemma3} morphisms $\log \sX \to \log \sY$ are identified with morphisms of schemes $\hX \to \hY$ such that $n \mX \supseteq \mY|_{\hX}$ for some $n$.
\end{proof}

\begin{rema} \label{rema:qcbase}
The above argument actually proves the stronger result that, in the notation of \cite[Def.1.4]{KM21}, the canonical functor
\[ \log: \PSCH^{\qc} \to \{\textrm{ log schemes } \} \]
identifies $\PSCH^{\qc}[\Lambda^{-1}]$ with a full subcategory of the category of log schemes
\[ \PSCH^{\qc}[\Lambda^{-1}] \stackrel{\textrm{full}}{\subseteq} \{\textrm{ log schemes } \}. \]
More generally, for any full subcategory $C$ of $\PSCH^{\qc}$ closed under the $(-)^{(n)}$ we get a fully faithful embedding $C[\Lambda^{-1}] \subseteq \{$ log schemes $\}$.
\end{rema}

\begin{rema}
The title of this section refers to the fact that $\PSm^{\nc}_k[\Lambda^{-1}]$ can be identified with a full subcategory of the category $\Pro(\PSm^{\nc}_k)$ of pro-objects, and the functor $\PSm^{\nc}_k \to \PSm^{\nc}_k[\Lambda^{-1}]$ can be identified with the functor $\PSm^{\nc}_k \to \Pro(\PSm^{\nc}_k)$ which sends $\sX$ to $``\lim_{n \in \NN}" \sX^{(n)}$.
\end{rema}

\section{Nisnevich Topologies} \label{sec:Nis}

\begin{defi}
A commutative square of schemes
\[ \xymatrix{
W \ar[r] \ar[d] & V \ar[d] \\
U \ar[r] & X
} \]
is a \emph{distinguished Nisnevich square} if $U \to X$ is an open immersion, $V \to X$ is an étale morphism and $Z|_V \to Z$ is an isomorphism for some (equivalently every) closed subscheme structure on $Z = X \setminus U$.  
\end{defi}

\begin{defi} \label{defi:PNis}
A commutative square (left)
\[ \xymatrix{
\sW \ar[r] \ar[d] & \sV \ar[d] 
&&
\hW \ar[r] \ar[d] & \hV \ar[d] 
\\
\sU \ar[r] & \sX
&&
\hU \ar[r] & \hX
} \]
in $\PSm^{\nc}_k$ is a \emph{distinguished Nisnevich square} if the underlying square of schemes (right) is a distinguished Nisnevich square, and all morphisms are \emph{minimal} in the sense that $\mW = \mX|_{\hW}$, $\mV = \mX|_{\hV}$, and $\mU = \mX|_{\hU}$.
\end{defi}

\begin{defi} \label{defi:PNis}
The \emph{Nisnevich topology} on $\PSm^{\nc}_k$ is generated by\footnote{We include the empty covering of the empty pair.} families of the form $\{\sU \to \sX, \sV \to \sX\}$ associated to distinguished Nisnevich squares. We write $\Nis$ for the Nisnevich topology.
\end{defi}


For the log version we use a log analogue of reduced morphisms of pairs.

\begin{defi}[{\cite[Def.III.1.1.5]{Ogu18}}] \label{defi:logInd} 
Given a morphism of schemes $f: Y \to X$, a log structure $M_X \to \OO_X$ on $X$ induces a log structure $f^*_{\log}M_X \to \OO_Y$ on $Y$. Explicitly, one takes 
\[ f^*_{\log}M_X := f^{-1}M_X \sqcup_{γ^{-1}\OO_Y^*} \OO_Y^* \]
where $γ: f^{-1}M_X \to f^{-1}\OO_X \to \OO_Y$ is the canonical morphism, and the pushout happens in the category of sheaves of monoids, \cite[Prop.II.1.1.5]{Ogu18}. A morphism of log schemes of the form 
$ (Y, f^*_{\log}M_X) \to (X, M_X) $
is called \emph{strict}.
\end{defi}

\begin{rema}
The pushout in Definition~\ref{defi:logInd} is the log structure associated to the prelog structure $f^{-1}M_X \to \OO_Y$.
\end{rema}

\begin{rema} \label{rema:indSub}
If $M_X \to \OO_X$ is injective, then so is $f^{-1}M_X \to f^{-1}\OO_X$. If $f^{-1}\OO_X \to \OO_Y$ is also injective, e.g., $Y \to X$ is a dominant morphism of integral schemes,\footnote{In this case we are dealing with a submorphism of the inclusion $f^{-1}\sK_X \to \sK_Y$.} then $γ: f^{-1}M_X \to \OO_Y$ is injective. It follows that $f^*_{\log}M_X$ is is the subsheaf of monoids of $\OO_Y$ generated by $f^{-1}M_X$ and $\OO_Y^*$,%
\footnote{For this one can locally apply the criterion \cite[Prop.I.1.1.5(2)]{Ogu18} to (our) case where $Q_1, Q_2 \subseteq R$ are submonoids, $Q_2, R$ are groups, and $P = Q_1 \cap Q_2$.
}
\[ f^*_{\log}M_X = \OO_Y^* \cdot f^{-1}M_X \subseteq \OO_Y. \]
\end{rema}

The following is likely to be obvious to readers familiar with log geometry but we include it for the convenience of the novice log geometer.\footnote{Such as the author.}

\begin{lemm} \label{lemm:strictReduced}
Suppose that $\sY \to \sX$ is a reduced morphism, i.e., $\mY = \mX|_{\hY}$, in $\PSm^{\nc}_k$ such that $f: \hY \to \hX$ is étale. 
Then the corresponding morphism $\log \sY \to \log \sX$ in $\SmlSm_k$ is strict, i.e., $\OO_{\hY} \cap \OO_{\iY}^* = f_{\log}^*(\OO_{\hX} \cap \OO_{\iX}^*)$.
\end{lemm}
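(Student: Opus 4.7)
The plan is to verify the equality of monoids $\OO_{\hY} \cap \OO_{\iY}^* = f_{\log}^*(\OO_{\hX} \cap \OO_{\iX}^*)$ by first rewriting the right hand side using Remark~\ref{rema:indSub}, and then checking equality étale-locally using the standard snc chart.

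First I would observe that Lemma~\ref{lemm:logloglemma}(1) does apply: since $f$ is étale and $\mY = \mX|_{\hY}$ we have $\iY = f^{-1}(\iX)$, so $f(\iY) \subseteq \iX$ and $f$ genuinely induces a morphism $\log\sY \to \log\sX$. Next, the map $f^{-1}\OO_{\hX} \to \OO_{\hY}$ is injective (étale morphisms of smooth $k$-schemes are flat, so faithfully flat on stalks of smooth varieties, and more simply because $\hY \to \hX$ is a dominant morphism of integral schemes on each connected component). Therefore Remark~\ref{rema:indSub} identifies
\[ f^*_{\log}(\OO_{\hX} \cap \OO_{\iX}^*) = \OO_{\hY}^* \cdot f^{-1}(\OO_{\hX} \cap \OO_{\iX}^*) \subseteq \OO_{\hY}. \]
The inclusion $\subseteq$ of the claim is then immediate: $\OO_{\hY}^* \subseteq \OO_{\hY} \cap \OO_{\iY}^*$ trivially, and a section $g$ of $\OO_{\hX} \cap \OO_{\iX}^*$ pulls back to a section of $\OO_{\hY}$ which is invertible on $\iY = f^{-1}(\iX)$, and the target monoid is closed under products.

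The substantive direction is $\supseteq$, which I would prove locally on $\hY$. By Definition~\ref{defi:sncSupport}, Zariski-locally on $\hX$ we may pick an étale chart $\theta: U \to \AA^d$ realising $\mX|_U$ as the pullback of $V(t_1^{e_1}\cdots t_d^{e_d})$. Pulling back along $f$, the composite $f^{-1}(U) \to U \to \AA^d$ is still étale and realises $\mY|_{f^{-1}(U)}$ as the pullback of the same snc divisor (since $\mY = \mX|_{\hY}$). On such a chart, the standard local description of the compactifying log structure for an snc divisor gives
\[ \OO \cap \OO_{\iota}^* = \OO^* \cdot \langle \theta^*t_{i_1}, \dots, \theta^*t_{i_r}\rangle^{\NN} \]
where $i_1, \dots, i_r$ enumerate those $i$ with $e_i > 0$. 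Applying this both on $U$ and on $f^{-1}(U)$, every local section of $\OO_{\hY} \cap \OO_{\iY}^*$ is a unit times a monomial in the $f^*\theta^*t_{i_j}$, and each $\theta^*t_{i_j}$ lies in $\OO_{\hX} \cap \OO_{\iX}^*$. Hence the section lies in $\OO_{\hY}^* \cdot f^{-1}(\OO_{\hX} \cap \OO_{\iX}^*)$, completing the proof.

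The main obstacle is the local description $\OO \cap \OO_{\iota}^* = \OO^* \cdot \langle t_{i_1},\dots,t_{i_r}\rangle^{\NN}$ on an snc chart, but this is a routine consequence of unique factorisation in the regular local rings at points of $\hX$: any regular function invertible on the complement of an snc divisor must, up to a unit, be a monomial in the defining equations of the components meeting the given point. Everything else is bookkeeping about pullbacks of monoids and charts.
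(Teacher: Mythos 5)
Your proof is correct and follows essentially the same route as the paper's: reduce to a local snc chart, note that the coordinates $\theta^*t_i$ remain prime in the (UFD) local rings of $\hY$ because étale maps preserve regularity of $V(t_i)$, and conclude that $\OO\cap\OO_\iota^*$ is units times monomials in those coordinates on both source and target, which is exactly the induced log structure. The paper packages this as "both log structures are associated to the prelog structures $\oplus_i a_i^{\NN}\to A$ and $\oplus_i b_i^{\NN}\to B$, and the latter is the composite of the former with $A\to B$," while you phrase it via the subsheaf description of Remark~\ref{rema:indSub} and check the two inclusions directly; the substance is the same.
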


\begin{proof}
First note that if $k[t_1, \dots, t_d] \to A$ is an étale morphism and $A$ is local, then each $t_i$ is sent to a prime element of $A$. Indeed, the induced morphism $k[t_1, \dots, \hat{t}_i, \dots, t_{d}] \to A / \langle t_i \rangle$ is an étale homomorphism from a regular ring so the target is also regular. Since $A$ is local, so is $A/ \langle t_i \rangle$, so we deduce that $A/ \langle t_i \rangle$ is regular and connected, in particular, integral, so $\langle t_i \rangle$ is prime.

Returning to the lemma, the question is Zariski local so suppose that $A \to B$ is a local étale homomorphism between UFD local rings, $k[t_1, \dots, t_d] \to A$ is an étale morphism, and equip everything with the divisor coming from $t_1^{e_1} \dots t_c^{e_c}$ for some $e_i > 0$. That is, we are considering the morphism of pairs $(\Spec(B), \langle b_1^{e_1} \dots b_c^{e_c} \rangle) \to (\Spec(A), \langle a_1^{e_1} \dots a_c^{e_c} \rangle)$ where $a_i, b_i$ are the images of $t_i$. Since the $a_i, b_i$ are prime, the corresponding log structures are the log structures associated to the prelog rings $\oplus_{i = 1}^c a_i^{\NN} \to A$ and $\oplus_{i = 1}^c b_i^{\NN} \to B$. But since $a_i \mapsto b_i$, the prelog structure $\oplus_{i = 1}^c b_i^{\NN} \to B$ is precisely the composition  $\oplus_{i = 1}^c a_i^{\NN} \to A \to B$ (this corresponds to the morphism $γ$ in Example~\ref{defi:logInd}). Hence, the log structure on $\log(\Spec(B), \langle b_1^{e_1} \dots b_c^{e_c} \rangle)$ is 
the one induced by the log structure on $\log(\Spec(A), \langle a_1^{e_1} \dots a_c^{e_c} \rangle)$ 
via the morphism $\Spec(B) \to \Spec(A)$.
\end{proof}

\begin{defi}[{\cite[Def.3.1.4]{BPO22}}] \label{defi:logNis}
A commutative square (left)
\[ \xymatrix{
(W, α_W) \ar[r] \ar[d] & (V, α_V) \ar[d] 
&&
W \ar[r] \ar[d] & V \ar[d] 
\\
(U, α_U) \ar[r] & (X, α_X)
&&
U \ar[r] & X
} \]
of log schemes is a  \emph{strict distinguished Nisnevich square} if the underlying square of schemes (right) is a distinguished Nisnevich square, and all morphisms are \emph{strict} in the sense that the log structures $α_W, α_V, α_U$ are the ones induced by $α_X$, Def.\ref{defi:logInd}.
\end{defi}

\begin{exam} \label{exam:distSquaresImage}
Consider the following three sets with $\sX$ a modulus pair.
\begin{enumerate}
 \item distinguished Nisnevich squares over the scheme $\hX$,
 \item distinguished Nisnevich squares over the pair $\sX$,
 \item strict distinguished Nisnevich squares over the log scheme $\log \sX$.
\end{enumerate}
The first two are always in bijection, and Lemma~\ref{lemm:strictReduced} says that the latter two are also in bijection when $\sX \in \PSm^{\nc}_k$.
\end{exam}

\begin{defi}
The \emph{strict Nisnevich topology} on $\SmlSm_k$ is generated by families of the form $\{(U, α_U) \to (X, α_X), (V, α_V) \to (X, α_X)\}$ associated to distinguished Nisnevich squares. The strict Nisnevich topology is denoted $\sNis$.
\end{defi}

\begin{rema} \label{rema:excisionDescent}
Analogous to the Nisnevich topology on categories of schemes, descent (\v{C}ech or hyper) is equivalent to excision for the Nisnevich topology on $\PSm^{\nc}_k$ resp. the strict Nisnevich topology on $\SmlSm_k$. That is, a presheaf (of sets, spaces, complexes of abelian groups, etc.) is a sheaf if and only if it sends distinguished squares to cartesian squares, and the initial object to the terminal object. In both cases this can be deduced from the case of usual schemes by observing that the small Nisnevich site of $\sX \in \PSm^{\nc}_k$ resp. $\log \sX \in \SmlSm_k$ (objects are morphisms towards $\sX$ resp. $\log \sX$ which are reduced, resp. strict, and étale on the total space, resp. underlying scheme) is equivalent to the small Nisnevich site of $\hX$.
\end{rema}

\begin{prop} \label{prop:NisSame}
The canonical functor induces an equivalence
\[ \Shv_{\Nis}(\PSm^{\nc}_k)[\Lambda^{-1}] \stackrel{\sim}{\to} \Shv_{\sNis}(\SmlSm_k) \]
where we identify $\Lambda$ with its image under Yoneda%
\footnote{The Nisnevich topology on $\PSm^{\nc}_k$ is subcanonical, but we don't care about this right now.
} %
and the localisation is happening in the category $\PrL$ of presentable categories with colimit preserving functors.
\end{prop}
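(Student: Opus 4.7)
The plan is to promote Proposition~\ref{prop:PSmLambdaSmlSm} to presheaves and then to sheaves by commuting the various $\PrL$-localizations. First, since the presheaf construction $\PSh(-)\colon \mathrm{Cat}_\infty \to \PrL$ is a left adjoint, it preserves localizations, so Proposition~\ref{prop:PSmLambdaSmlSm} immediately gives an equivalence in $\PrL$
\[ \PSh(\PSm^{\nc}_k)[\Lambda^{-1}] \stackrel{\sim}{\to} \PSh(\SmlSm_k), \]
where on the left one localizes at the Yoneda images of the morphisms in $\Lambda$.

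Next I would rewrite each sheaf category as a further $\PrL$-localization of its presheaf category. By Remark~\ref{rema:excisionDescent}, $\Shv_{\Nis}(\PSm^{\nc}_k) = \PSh(\PSm^{\nc}_k)[W_{\Nis}^{-1}]$, where $W_{\Nis}$ is the class of morphisms $\sU \sqcup_{\sW} \sV \to \sX$ (together with $\varnothing \to \ast$) arising from distinguished Nisnevich squares; similarly $\Shv_{\sNis}(\SmlSm_k) = \PSh(\SmlSm_k)[W_{\sNis}^{-1}]$. Since $\PrL$-localizations compose to the localization at the union of the inverted classes, I can rearrange
\[ \Shv_{\Nis}(\PSm^{\nc}_k)[\Lambda^{-1}] = \PSh(\PSm^{\nc}_k)[\Lambda^{-1}][W_{\Nis}^{-1}] \stackrel{\sim}{\to} \PSh(\SmlSm_k)[\overline{W_{\Nis}}^{-1}], \]
where $\overline{W_{\Nis}}$ denotes the image of $W_{\Nis}$ under the presheaf-level equivalence of the previous paragraph.

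It remains to identify $\overline{W_{\Nis}}$ with $W_{\sNis}$. The equivalence sends the representable attached to $\sX \in \PSm^{\nc}_k$ to the representable attached to $\log \sX$ and is colimit preserving, so each generating morphism $\sU \sqcup_{\sW} \sV \to \sX$ in $W_{\Nis}$ is mapped to $\log\sU \sqcup_{\log\sW} \log\sV \to \log \sX$. Example~\ref{exam:distSquaresImage} provides a bijection between distinguished Nisnevich squares over $\sX$ and strict distinguished Nisnevich squares over $\log \sX$, and every object of $\SmlSm_k$ is of the form $\log \sX$ by Definition~\ref{defi:SmlSmk}. Hence $\overline{W_{\Nis}}$ and $W_{\sNis}$ generate the same $\PrL$-localization, yielding the desired equivalence with $\Shv_{\sNis}(\SmlSm_k)$. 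The main obstacle, such as it is, is purely formal bookkeeping with $\PrL$-localizations — that $\PSh$ carries a categorical localization to a $\PrL$-localization and that stacked $\PrL$-localizations may be merged into one at the union of the inverted classes — after which the entire proof reduces to the already-established Proposition~\ref{prop:PSmLambdaSmlSm} and the square-level bijection of Example~\ref{exam:distSquaresImage}.
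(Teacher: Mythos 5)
Your proof is correct and takes essentially the same route as the paper: both arguments reduce to Proposition~\ref{prop:PSmLambdaSmlSm} together with the matching of distinguished squares in Example~\ref{exam:distSquaresImage} (via Remark~\ref{rema:excisionDescent}), using only formal manipulation of $\PrL$-localisations. The paper merely phrases the identical chain of identifications in terms of colimit-preserving functors out of each side, whereas you commute the localisations directly; these are two descriptions of the same argument.
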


\begin{proof}
Colimit preserving functors out of $\Shv_{\Nis}(\PSm^{\nc}_k)[\Lambda^{-1}]$ are the same as colimit preserving functors out of $\Shv_{\Nis}(\PSm^{\nc}_k)$ which invert elements of $\Lambda$. These are the same as functors on $(\PSm^{\nc}_k)^{op}$ which satisfy excision and invert elements of $\Lambda$. These are the same as functors on $(\SmlSm_k)^{op}$ which satisfy excision, Prop.\ref{prop:PSmLambdaSmlSm}, Exam~\ref{exam:distSquaresImage}, and these are the same as colimit preserving functors out of $\Shv_{\sNis}(\SmlSm_k)$.
\end{proof}



\section{Admissible modifications} \label{sec:adb}




\begin{defi} \label{defi:sbu}
In this note, we call a morphism $\sX \to \sY$ a \emph{smooth blowup} 
if $\hX = \Bl_{\hY} Z$ for some closed $Z \subseteq \mX$ for which 
there exists an open covering $\{U_i \to X\}_{i \in I}$ and étale morphisms $U_i \to \AA^d$ with 
\begin{align*}
	\mX|_{U_i} &= \Spec(k[x_1, \dots, x_d] / x_1^{e_1} x_2^{e_2}\dots x_a^{e_a}) |_{U_i}, \textrm{ and } \\
	Z|_{U_i} &= \Spec(k[x_1, \dots, x_d] / \langle x_{b_1}, x_{b_2}, \dots, x_{b_c} \rangle) |_{U_i}
\end{align*}
where 
$\{b_1, \dots, b_c\} \cap \{1, \dots, a\} \neq \varnothing$. 
We will write $\Sigma$ for the class of \emph{compositions} of smooth blowups.
\end{defi}



\begin{defi}
\label{defi:logsbu}
We call a morphism $\log(\sX) \to \log(\sY)$ in $\SmlSm_k$ is a \emph{smooth blowup} if it is the image of a smooth blowup in $\PSm_k^{\nc}$, Def.\ref{defi:sbu}.
We will overload the notation $\Sigma$ by also using it for the class of \emph{compositions} of smooth blowups in $\SmlSm_k$.
\end{defi}


\begin{prop} \label{prop:MSmSmlSmSigma}
There exist canonical equivalences of categories
\begin{align*}
\PSm_k[\Sigma^{-1}][\Lambda^{-1}] &\cong \SmlSm_k[\Sigma^{-1}], \\
\Shv_{\Nis}(\PSm_k)[\Sigma^{-1}][\Lambda^{-1}] &\cong \Shv_{\sNis}(\SmlSm_k)[\Sigma^{-1}].
\end{align*}
\end{prop}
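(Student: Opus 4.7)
The plan is to reduce both displayed equivalences to the already-established Propositions~\ref{prop:PSmLambdaSmlSm} and~\ref{prop:NisSame}, which are formulated on $\PSm_k^{\nc}$. Since the target categories $\SmlSm_k[\Sigma^{-1}]$ and $\Shv_{\sNis}(\SmlSm_k)[\Sigma^{-1}]$ naturally land on the $\PSm_k^{\nc}$ side, the core new task is a bridging lemma: the inclusion $\iota: \PSm_k^{\nc} \hookrightarrow \PSm_k$ becomes an equivalence after inverting $\Sigma$, both at the categorical and sheaf-categorical levels.

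First I would establish
\[ \PSm_k^{\nc}[\Sigma^{-1}] \stackrel{\sim}{\to} \PSm_k[\Sigma^{-1}]. \]
Essential surjectivity is an invocation of embedded resolution: given $(\hX, \mX) \in \PSm_k$ a sequence of blowups with smooth centers produces $\pi: \hX' \to \hX$ with $(\hX', \pi^{-1}\mX) \in \PSm_k^{\nc}$, and by construction the resulting morphism $(\hX', \pi^{-1}\mX) \to (\hX, \mX)$ is a composition of smooth blowups in the sense of Def~\ref{defi:sbu}, hence lies in $\Sigma$. For fully faithfulness I would verify that $\Sigma$ satisfies a right calculus of fractions on $\PSm_k$ — identities, composition, and the right Ore condition follow from iterated blowup of pulled-back centers (possibly refined by further resolution), while right cancellability is inherited from the fact that the underlying morphism in $\Sigma$ is proper birational, hence a scheme epimorphism. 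Homs in the localisation are then filtered colimits $\colim_{\sX'} \hom_{\PSm_k}(\sX', \sY)$ over $\Sigma$-arrows $\sX' \to \sX$, and this colimit is computed on the cofinal subposet of $\nc$-resolutions $\sX' \in \PSm_k^{\nc}$.

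The first equivalence then follows by concatenation with Prop~\ref{prop:PSmLambdaSmlSm}:
\[ \PSm_k[\Sigma^{-1}][\Lambda^{-1}] \simeq \PSm_k^{\nc}[\Sigma^{-1}][\Lambda^{-1}] \simeq \PSm_k^{\nc}[\Lambda^{-1}][\Sigma^{-1}] \simeq \SmlSm_k[\Sigma^{-1}], \]
where the middle step uses that iterated localisation of $\infty$-categories commutes (any colimit-preserving functor out of the double localisation is, unambiguously, a functor inverting both classes). For the sheaf version the same resolution input upgrades to an equivalence $\Shv_{\Nis}(\PSm_k^{\nc})[\Sigma^{-1}] \simeq \Shv_{\Nis}(\PSm_k)[\Sigma^{-1}]$: by Remark~\ref{rema:excisionDescent} sheaves are detected by excision on individual small Nisnevich sites, and those sites agree for an $\nc$-pair and its $\Sigma$-image, so the restriction/left-Kan-extension pair along $\iota$ restricts to an equivalence on $\Sigma$-local sheaves. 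Combined with Prop~\ref{prop:NisSame} and the same commutation of localisations this yields the second equivalence.

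The main obstacle will be the bridging lemma. Essential surjectivity requires embedded resolution of singularities for an effective Cartier divisor on a smooth $k$-scheme — unconditional via Hironaka in characteristic zero, but in positive characteristic one has to appeal to a weaker principalisation statement (sufficient here since the ambient scheme is already smooth), or else read $\PSm_k$ in the statement as $\PSm_k^{\nc}$ in which case the bridging lemma is trivial and the proposition is a direct consequence of the earlier results. Verifying right calculus of fractions further requires checking, using the étale charts $U_i \to \AA^d$ of Def~\ref{defi:sbu}, that strict transforms of smooth blowups along arbitrary $\PSm_k$-morphisms are themselves $\Sigma$-morphisms up to a further resolution, which is a local computation on these charts.
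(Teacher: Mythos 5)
Your core argument --- commute the two localisations and then apply Propositions~\ref{prop:PSmLambdaSmlSm} and~\ref{prop:NisSame}, noting that by Definition~\ref{defi:logsbu} the class $\Sigma$ in $\SmlSm_k$ is by fiat the image of the class $\Sigma$ in $\PSm_k^{\nc}$ --- is exactly the paper's (two-sentence) proof. The one genuine divergence is your ``bridging lemma'' $\PSm_k^{\nc}[\Sigma^{-1}] \simeq \PSm_k[\Sigma^{-1}]$, which solves a problem the paper does not pose: the paper never defines a category $\PSm_k$ of pairs without the normal crossings condition, and the symbol $\PSm_k$ in the statement (as in several other places, e.g.\ the proof of Proposition~\ref{prop:PSmLambdaSmlSm}) is shorthand for $\PSm_k^{\nc}$; indeed Definition~\ref{defi:sbu} only makes sense for pairs admitting the nc-charts. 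Read that way, your own closing remark applies: the bridging lemma is vacuous and the proposition is the direct concatenation you describe. Had the larger category really been intended, your lemma would be the right missing ingredient, but it would silently import embedded resolution of singularities --- a hypothesis the paper is careful to invoke only in Proposition~\ref{prop:mH} --- so the unconditional statement over an arbitrary field forces the $\nc$ reading. Likewise, your verification of a right calculus of fractions for $\Sigma$ is not needed for the commutation-of-localisations step (which holds for arbitrary classes of morphisms by the universal property of localisation); it is only relevant to the optional bridging lemma.
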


\begin{rema}
The first localisation is in the category of small $\infty$-categories, Def.\ref{defi:catloc}. The second one is in the category $\PrL$ of presentable categories and colimit preserving functors.
\end{rema}

\begin{proof}
The first one follows from Proposition~\ref{prop:PSmLambdaSmlSm}. The second one is analogous to Proposition~\ref{prop:NisSame}.
\end{proof}

\section{Homotopy categories} \label{sec:homCat}

\begin{defi} \label{defi:bcube}
Recall that one writes $\bcube$ for the pair $(\PP^1, \{\infty\}) \in \PSm^{\nc}_k$. More generally, for any pair $\sX = (\hX, \mX)$ we write $\bcube_{\sX}$ for the pair 
\[ (\hX {\times} \PP^1, \ \mX {\times} \PP^1 + \hX {\times} \{\infty\}). \]
We will write $\CI$ for the class of such morphisms.
\end{defi}

\begin{defi} \label{defi:MH}
The category $\MH_k^{\nc} \subseteq \Shv_{\Nis}(\PSm_k^{\nc}, \sS)$ of (normal crossings) \emph{homotopy types with modulus} is the full subcategory of those Nisnevich sheaves of spaces $F$ on $\PSm_k^{\nc}$ which:
\begin{enumerate}
 \item send $\varnothing$ to $\ast$,
 \item send smooth modifications to equivalences, Def.\ref{defi:sbu},
 \item send the morphisms $\bcube_{\sX} \to \sX$ to equivalences, Def.\ref{defi:bcube}.
\end{enumerate}
\end{defi}

\begin{rema} \label{rema:NisExcis}
In other words,
\[ \MH_k^{\nc} = \Shv_{\Nis}(\PSm_k^{\nc}, \sS)[\Sigma^{-1}][\CI^{-1}] \]
where
%
 $\Sigma$ and ${\CI}$ means the image of these classes under Yoneda.
\end{rema}

\begin{defi} \label{defi:lH}
The log homotopy category $\logH_k \subseteq \Shv_{\sNis}(\SmlSm_k, \sS)$ is the full subcategory of those sheaves of spaces $F$ on $\SmlSm_k$ which:
\begin{enumerate}
 \item send $\varnothing$ to $\ast$,
 \item send smooth blowups to equivalences, Def.\ref{defi:logsbu},
 \item send the morphisms $\log \bcube_{\sX} \to \log \sX$ to equivalences.
\end{enumerate}
\end{defi}

\begin{rema}
In the first version of \cite{BPO23} the symbol $\logH_k$ was instead used for $\Shv_{\sNis}(\SmlSm_k, \sS)[\Sigma_{\smod}^{-1}, (\log \CI)^{-1}]$ where $\Sigma_{\smod}$ is the class of compositions of \emph{modifications} rather than smooth blowups. The difference is that instead of $\{b_1, \dots, b_c\} \cap \{1, \dots, a\} \neq \varnothing$ in Definition~\ref{defi:sbu}, one instead asks only that $1 \leq b_1, \dots, b_c \leq a$. That is, instead of $Z$ being contained in an irreducible component of $\sX$, one asks the stronger condition that it is an intersection of irreducible components of $\sX$. It seems likely that newer versions of \cite{BPO23} will use the symbol $\logH_k$ as we are using it.
\end{rema}

\begin{theo} \label{theo:main}
Restriction along $\log: \PSm_k^{\nc} \to \SmlSm_k$ identifies $\logH_k$ with the full subcategory of $\MH_k^{\nc}$ whose objects are those presheaves of spaces $F$ which send morphisms in $\Lambda$ to equivalences. That is, we have an equivalence
\[ \MH_k^{\nc}[\Lambda^{-1}] \stackrel{\sim}{\to} \logH_k \]
in the category $\PrL$ of presentable categories and colimit preserving functors.
\end{theo}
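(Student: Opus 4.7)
The plan is to derive Theorem~\ref{theo:main} by reorganising the localisations already constructed in Sections~\ref{sec:schemes}--\ref{sec:adb}. In $\PrL$, iterated localisation is equivalent to localisation at a union, and the order of the localising classes is irrelevant: $C[S^{-1}][T^{-1}] \simeq C[T^{-1}][S^{-1}] \simeq C[(S \cup T)^{-1}]$. Unpacking Definition~\ref{defi:MH} via Remark~\ref{rema:NisExcis}, one can therefore rewrite
\[
\MH_k^{\nc}[\Lambda^{-1}] = \Shv_{\Nis}(\PSm_k^{\nc}, \sS)[\Sigma^{-1}][\CI^{-1}][\Lambda^{-1}] \simeq \Shv_{\Nis}(\PSm_k^{\nc}, \sS)[\Sigma^{-1}][\Lambda^{-1}][\CI^{-1}].
\]

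Next I would apply the second equivalence of Proposition~\ref{prop:MSmSmlSmSigma} to replace the first two brackets, obtaining
\[
\MH_k^{\nc}[\Lambda^{-1}] \simeq \Shv_{\sNis}(\SmlSm_k, \sS)[\Sigma^{-1}][\CI^{-1}].
\]
Under this equivalence the class $\CI$ of morphisms $\bcube_{\sX} \to \sX$ is identified with the class $\log\CI$ of morphisms $\log\bcube_{\sX} \to \log\sX$, since the underlying comparison functor is induced by $\log: \PSm_k^{\nc} \to \SmlSm_k$, which sends $\bcube_{\sX} \to \sX$ to $\log\bcube_{\sX} \to \log\sX$ by Definition~\ref{defi:bcube}. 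The right-hand side is therefore exactly $\logH_k$ by Definition~\ref{defi:lH}.

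Finally, to upgrade the equivalence $\MH_k^{\nc}[\Lambda^{-1}] \simeq \logH_k$ to the identification of $\logH_k$ with the full subcategory of $\Lambda$-local objects in $\MH_k^{\nc}$, I would invoke the standard fact that a localisation in $\PrL$ at a small set of maps is an accessible Bousfield localisation: the right adjoint to the localisation functor is fully faithful with essential image precisely the $\Lambda$-local objects, cf.\ \cite[Prop.5.5.4.15]{HTT}. There is essentially no obstacle at this stage: the content of the theorem has already been done in Propositions~\ref{prop:PSmLambdaSmlSm}, \ref{prop:NisSame}, \ref{prop:MSmSmlSmSigma}, and what remains is pure bookkeeping in $\PrL$. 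The one verification meriting care is that under the Proposition~\ref{prop:MSmSmlSmSigma} equivalence $\CI$ is sent to $\log\CI$ on the nose as a set of morphisms in the underlying category before the final $\CI$-localisation, but this is immediate from the construction of the comparison functor.
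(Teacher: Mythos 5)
Your argument is correct and is essentially the paper's own proof: both amount to commuting the localisations in $\PrL$ so that the $\Lambda$-localisation can be absorbed by the already-established scheme/log-scheme comparison (you route through Proposition~\ref{prop:MSmSmlSmSigma}, the paper through Proposition~\ref{prop:NisSame}, which differ only in whether $\Sigma$ is carried along before or after), and then observe that $\CI$ maps to $\log\CI$ under the comparison functor. Your closing remark identifying $\logH_k$ with the $\Lambda$-local objects via accessible Bousfield localisation is a point the paper leaves implicit, but it is the standard fact and no gap.
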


\begin{proof}
Proposition~\ref{prop:NisSame} gives an equivalence $\Shv_{\Nis}(\PSm_k^{\nc}, \sS)[\Lambda^{-1}] \stackrel{\sim}{\to} \Shv_{\sNis}(\SmlSm_k, \sS)$. Then inserting the definitions gives
\[ \xymatrix{
\MH_k^{\nc}[\Lambda^{-1}] 
\ar@{=}[r]^-{\textrm{Def.\ref{defi:MH}}}
& 
\Shv_{\Nis}(\PSm_k^{\nc}, \sS)[\Lambda^{-1}][\Sigma^{-1}][\CI^{-1}] 
\ar@{=}[d]
\\
\logH_k
\ar@{=}[r]^-{\textrm{Def.\ref{defi:lH}}}
 & 
\Shv_{\sNis}(\SmlSm_k, \sS)[\Sigma^{-1}][(\log \CI)^{-1}]
} \] 

\end{proof}

\section{Rational divisors} \label{sec:ratDiv}

Recall that the abelian group of rational numbers can be expressed as the union $\QQ = \cup_{n \in \NN_{\geq 0}} \tfrac{1}{n}\ZZ$, or equivalently, as the colimit $\QQ = \colim_{n \in \NN_{\geq 0}} \ZZ$ where the transition morphism $\ZZ \to \ZZ$ associated to the indexing arrow $n {\to} nm$ is multiplication by $m$.

More generally, consider the $\NN_{\geq 0}$ indexed diagram which sends $m \leq mn$ to $(-)^{(n)}: \PSm_k^{\nc} \to \PSm_k^{\nc}$. The colimit over this diagram is Koizumi--Miyazaki's category 
\begin{equation} \label{equa:mSmcolim}
\mSm_k
 = 
\underset{n \in \NN_{\geq 1}}{\colim}\ \PSm_k^{\nc}
\end{equation}
from \cite{KM24} also featuring in their later work with Saito \cite{KMS25}. Since the $(-)^{(n)}$ are fully faithful, $\mSm_k$ can also be described as the full subcategory $\mSm_k \subseteq \PSh(\PSm_k)$ whose objects are of the form $\sX^{(1/n)} := \hom((-)^{(n)}, \sX)$ for $\sX \in \PSm_k$.%
\footnote{A similar construction was used by Matsumoto in \cite[\S 8]{Mat23}.} %
The Nisnevich topology on $\mSm_k$ has coverings which are coverings in any of the  $\PSm_k^{\nc}$. Similarly, the class $\BI$ of morphisms in $\mSm_k$ is the union of the classes $\Sigma$ in the components $\PSm_k^{\nc}$ of this colimit. The class $\CI \subseteq \mSm_k$ is a little more subtle. 

\begin{defi} \label{defi:CIn}
Define
\begin{align*}
\CI_n &= \biggl \{ 
(\hX {\times} \PP^1, \ \mX {\times} \PP^1 + n \cdot \hX {\times} \{\infty\}) \to (\hX, \mX)
\biggr \}_{(\hX, \mX) \in \PSm_k^{\nc}}, \\
\CI &= \cup_{n \geq 1} \iota_n \CI_n
\end{align*}
where $ι_n: \PSm_k^{\nc} \to \mSm_k$ is the functor induced by the $n$th component of the colimit. Then the category $\mmH_k$ is defined, \cite[Def.1.10]{KMS25}, as the localisation 
\begin{equation} \label{equa:mmH}
\mmH_k = \Shv_{\Nis}(\mSm_k)[\BI^{-1}, \CI^{-1}].
\end{equation}
\end{defi}


\begin{prop} \label{prop:mH}
There is an equivalence of categories 
\begin{equation} \label{equa:mHcolim}
\mmH_k = \underset{n \in \NN_{\geq 1}}{\colim} \MH_k[\CI_n^{-1}] 
\end{equation}
where the transition morphism associated to $n \to nm$ 
is induced by the functor $(-)^{(m)}: \PSm_k^{\nc} \to \PSm_k^{\nc}$. Moreover, if $k$ admits resolution of singularities, the canonical functor associated to the initial object $1 \in \NN$
\begin{equation} \label{equa:MHkmmHk}
\MH_k \to \mmH_k
\end{equation}
is fully faithful.
\end{prop}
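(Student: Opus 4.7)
The plan is to insert the definitions. Since $\PSh(-, \sS)$ is left adjoint to the forgetful functor from $\PrL$ to small $\infty$-categories, it preserves colimits. Applied to Eq.~\eqref{equa:mSmcolim} this gives
\[ \PSh(\mSm_k, \sS) \cong \underset{n \in \NN_{\geq 1}}{\colim}\, \PSh(\PSm_k^{\nc}, \sS) \]
in $\PrL$. Left Bousfield localizations commute with $\PrL$-colimits, so the remaining task is to check that the inverted classes $\Nis$, $\BI$, $\CI$ on $\mSm_k$ are precisely the unions of their preimages along the $\iota_n$: this is immediate for $\Nis$ and $\BI$, and is the content of Def.~\ref{defi:CIn} for $\CI$. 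One also verifies that $(-)^{(m)}: \PSm_k^{\nc} \to \PSm_k^{\nc}$ is compatible with these classes, in particular sending $\CI_n$ into $\CI_{nm}$ by the direct computation $m(\mX \times \PP^1 + n\{\infty\}) = m\mX \times \PP^1 + nm\{\infty\}$. Chasing through gives the colimit formula for $\mmH_k$.

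\textbf{Full faithfulness.} The canonical functor $\MH_k \to \mmH_k$ is the map from the $n=1$ component into the filtered colimit $\colim_n \MH_k[\CI_n^{-1}]$. In a filtered colimit of $\PrL$-morphisms with fully faithful transition functors, each component-to-colimit functor is fully faithful. So it suffices to show that each transition $(-)^{(m)}: \MH_k \to \MH_k[\CI_m^{-1}]$ is fully faithful. The underlying scheme-level $(-)^{(m)}: \PSm_k^{\nc} \to \PSm_k^{\nc}$ is fully faithful for essentially trivial reasons: $m\mX \supseteq m\mY|_{\hX}$ iff $\mX \supseteq \mY|_{\hX}$, since multiplication by $m$ is injective on the poset of effective Cartier divisors. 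The task is to propagate this through the Bousfield localization at $\Nis$, $\Sigma$, $\CI_1$ and $\CI_m$.

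\textbf{Main obstacle.} This final step is the substantive one, and is where resolution of singularities enters. The intuition is that scaling the source by $(-)^{(m)}$ precisely compensates for the additional $\CI_m$-invariance imposed on the target: no new homotopy classes $\sX^{(m)} \to \sY^{(m)}$ should arise in $\MH_k[\CI_m^{-1}]$ beyond those computed by $\MH_k$-maps $\sX \to \sY$. This is a cancellation-type statement, structurally analogous to Voevodsky's cancellation theorem. Resolution of singularities is crucial because the $\CI_m$-local replacement of $\sY^{(m)}$ can be computed by a colimit of admissible smooth blowup modifications which, by resolution, may be taken to lie already inside $\PSm_k^{\nc}$; this reduces the computation to integer-divisor data accessible from $\MH_k$. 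A parallel cancellation at the stable level with $\QQ$-divisors is proved in \cite{KMS25}, and invoking or adapting those techniques should complete the argument here.
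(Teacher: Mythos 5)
Your setup is sound and matches the paper through all the reduction steps: the colimit identification via the universal property of presheaf categories and localisations is essentially the paper's argument; the reduction of full faithfulness of $\MH_k \to \mmH_k$ to full faithfulness of the transition functors, and the observation that $(-)^{(m)}$ is fully faithful on pairs (hence its left Kan extension $\mu^*$ is fully faithful on presheaves), are exactly the paper's first two steps. But the paragraph you label ``Main obstacle'' is the actual content of the proof, and you do not supply it: you reframe it as a cancellation-type statement analogous to Voevodsky's cancellation theorem and defer to techniques of \cite{KMS25}. That is a genuine gap, and the framing points you toward a much harder argument than is needed --- no cancellation theorem is involved. The paper's argument is a formal one about localisations restricting along a fully faithful functor: writing $\mu = (-)^{(m)}$, one shows that for any $F$ which is $\Nis$-local and $\CI_n$-local, one has $F = \mu_* L_{\Nis, \CI_{mn}} \mu^* F$. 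The point is that the essential image of $\mu$ is closed under the generators of both localisations: a Nisnevich cover of $\sX^{(m)}$ is $\mu$ of a Nisnevich cover of $\sX$, and a $\CI_{mn}$-morphism over $\sX^{(m)}$ is $\mu$ of a $\CI_n$-morphism over $\sX$. Hence neither $L_{\Nis}$ (described by transfinite iteration of $\check{H}^0$) nor $L_{\CI_{mn}}$ (described by the colimit formula of \cite[Prop.3.4.(1)]{Hoy17} over compositions of $\CI_{mn}$-morphisms) changes the value of $\mu^* F$ on the image of $\mu$, and full faithfulness of the transition functor follows by adjunction. None of this is in your proposal, and without it the proof is incomplete.

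Relatedly, your account of where resolution of singularities enters does not match its actual role. It is not used to resolve the $\CI_m$-local replacement into smooth blowups lying in $\PSm_k^{\nc}$; it is used (Remark~\ref{rema:CanUseMSm}) to identify $\PSm_k^{\nc}[\Sigma^{-1}]$ with $\MSm_k$, a category in which the morphisms of $\CI_{mn}$ are categorical pullbacks of $(\PP^1, mn\{\infty\}) \to (\Spec(k), \varnothing)$ and hence closed under pullback --- which is precisely the hypothesis needed to invoke \cite[Prop.3.4.(1)]{Hoy17} in the step above.
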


\begin{rema} \label{rema:CanUseMSm}
The resolution of singularities assumption is probably unnecessary. It is just for convenience so that we can use the category $\MSm_k$ where morphisms of $\CI_n$ are categorical pullbacks of $(\PP^1, n\{\infty\}) \to (\Spec(k), \varnothing)$. This is because we want to apply \cite[Prop.3.4.(1)]{Hoy17} to describe the localisation at $\CI_{mn}$, so we want the class $\CI_{mn}$ to be closed under categorical pullback.
\end{rema}

\begin{proof}
The equivalence \eqref{equa:mHcolim} is rather straightforward. Let $\sE$ be a presentable infinity category. Then for both $\sC = \mmH_k$ and $\sC = \underset{n \in \NN_{\geq 1}}{\colim} \MH_k[\CI_n^{-1}]$, the category of accessible functors ${\operatorname{Fun}}(\sC, \sE)$ is identified with the full subcategory of objects in  $\lim_{n \in \NN} {\operatorname{Fun}}(\PSm_k^{\nc}, \sE)$ such that the $n$th term inverts morphisms in $\Sigma \cup \CI_n$ and sends Nisnevich squares to cocartesian squares.

For the fully faithfulness claim, it suffices to show that each transition $\MH_k[\CI_n^{-1}] \to \MH_k[\CI_{mn}^{-1}]$ is fully faithful. We do this in steps. 

First, as observed in Remark~\ref{rema:CanUseMSm}, since we are inverting smooth blowups and not just modifications, and the base field has resolution of singularities, we have an equivalence $\PSm_k[\Sigma^{-1}] \cong \MSm_k$, so we can work with this latter category. 

Next using the calculus of fractions description of morphisms in $\PSm_k[\Sigma^{-1}] \cong \MSm_k$ we see that the induced functor $$μ := (-)^{(m)}: \MSm_k^{\nc} \to \MSm_k^{\nc}$$ is fully faithful. It follows that the left Kan extension $μ^*: \PSh(\MSm_k^{\nc}) \to \PSh(\MSm_k^{\nc})$ is fully faithful.

Thirdly, consider the two localisation functors 
\begin{align*}
L_{\Nis}: 
\PSh(\MSm_k^{\nc}) &\to \Shv_{\Nis}(\MSm_k^{\nc}), \\
L_{\CI_{mn}}: 
\PSh(\MSm_k^{\nc}) &\to \PSh(\MSm_k^{\nc})[\CI_{mn}^{-1}].
\end{align*}
Both targets are canonically identified with subcategories of $\PSh(\MSm_k^{\nc})$, so the functors $L_\bigcirc$ can be considered as endofunctors of $\PSh(\MSm_k^{\nc})$. We claim that for any $\Nis$-local, resp. $\CI_n$-local, $F \in \PSh(\MSm_k^{\nc})$ we have $F = μ_*L_\bigcirc μ^* F$ for $\bigcirc = \Nis, \CI_{mn}$ (note the subscript on the first $\CI$ is $n$ and the second one is $mn$).

For $\bigcirc = \Nis$ this follows from the usual description of the sheafification functor as a transfinite iteration of $\check{H}^0F(\sX) = \colim_{R \to \sX} \Map(R, F)$ where the colimit is over covering sieves. For any $\sX$ in the image of $μ$, and Nisnevich covering $\{\sU_i \to \sX\}_{i \in I}$, the morphisms $\sU_i \to \sX$ are also in the image of $μ$, so if $G$ is already a Nisnevich sheaf on the image of $μ$, then $G \to L_{\Nis}G$ is an equivalence on the image of $μ$.

The same argument works for $\bigcirc = \CI_{mn}$ in light of the description $L_{\CI_{mn}}F(\sX) = \colim_{\sY \to \sX} F(\sY)$ where the colimit is over the fullsubcategory of $(\MSm_k^{\nc})_{/\sX}$ whose objects are compositions of morphisms in $\CI_{mn}$, \cite[Prop.3.4.(1)]{Hoy17}. Namely, if $\sX$ is in the image of $μ$ and $\sY \to \sX$ is a composition of morphisms in $\CI_{mn}$, then $\sY \to \sX$ is in the image of $μ$. So if $G$ is already $\CI_{mn}$-local on the image of $μ$, then $G \to L_{\CI_{mn}}G$ is an equivalence on the image of $μ$.

Combining the above two claims we will obtain: if $F$ is both $\Nis$-local and $\CI_n$-local, then $F = μ_*L_{\Nis, \CI_{mn}} μ^* F$ where $L_{\Nis, \CI_{mn}}$ is the localisation functor
\[ L_{\Nis, \CI_{mn}}: \PSh(\MSm_k^{\nc}) \to \Shv_{\Nis}(\MSm_k^{\nc})[\CI_{mn}^{-1}]. \]
Indeed, one sees that $L_{\Nis, \CI_{mn}}$ can be described as the transfinite composition $L_{\Nis, \CI_{mn}} = \colim_{i \in \NN} (L_{\Nis} \circ L_{\CI_{mn}})^{\circ i}$. Consequently, we have shown that the $(n {\to} mn)$-transition functor
\[ 
\underset
{=\MH_k[\CI_n^{-1}]}
{\Shv_{\Nis}(\MSm_k^{\nc})[\CI_{n}^{-1}]}
\to 
\underset
{=\MH_k[\CI_{mn}^{-1}]}
{\Shv_{\Nis}(\MSm_k^{\nc})[\CI_{mn}^{-1}]}
\]
is fully faithful.
\end{proof}

Stabilising, \cite[\S 1.4]{HA}, with respect to the simplicial circle $\ast \underset{\ast \sqcup \ast}{\sqcup} \ast$ gives stable $\infty$-categories $\MSH_k = \Sp(\MH_k)$ and $\mmSH_k = \Sp(\mmH_k)$. Pushing Proposition~\ref{prop:mH} through the stabilisation functor $\Sp(-)$ gives a stable version.

\begin{coro} \label{coro:MSHvsmSH}
There is an equivalence of categories 
\begin{equation} 
\mmSH_k = \underset{n \in \NN_{\geq 1}}{\colim} \MSH_k[\CI_n^{-1}] 
\end{equation}
where the transition morphism associated to $n \to nm$ 
is induced by the functor $(-)^{(m)}: \PSm_k^{\nc} \to \PSm_k^{\nc}$. Moreover, if $k$ admits resolution of singularities, the canonical functor associated to the initial object $1 \in \NN$
\begin{equation} \label{equa:MHkmmHk}
\MSH_k \to \mmSH_k
\end{equation}
is fully faithful.
\end{coro}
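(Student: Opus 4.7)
The plan is to apply the stabilisation functor $\Sp(-)\colon \PrL \to \PrL^{\mathrm{st}}$ directly to Proposition~\ref{prop:mH}. First, $\Sp$ is left adjoint to the inclusion $\PrL^{\mathrm{st}} \hookrightarrow \PrL$, so it preserves colimits computed in $\PrL$. Second, for any small class of morphisms $S$, the localisation $\sC \leadsto \sC[S^{-1}]$ is itself a colimit in $\PrL$ (it is a pushout inverting the elements of $S$), so stabilisation commutes with such localisations, giving $\Sp(\MH_k[\CI_n^{-1}]) \simeq \MSH_k[\CI_n^{-1}]$. Combining these observations with Proposition~\ref{prop:mH} immediately yields
\[ \mmSH_k = \Sp(\mmH_k) \simeq \Sp\bigl(\underset{n \in \NN_{\geq 1}}{\colim}\, \MH_k[\CI_n^{-1}]\bigr) \simeq \underset{n \in \NN_{\geq 1}}{\colim}\, \MSH_k[\CI_n^{-1}]. \]

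For the fully faithfulness claim the safest route is to re-run the stepwise argument of Proposition~\ref{prop:mH} verbatim but with presheaves of spectra replacing presheaves of spaces. Each step is coefficient-agnostic: the fully faithful functor $μ = (-)^{(m)}\colon \MSm_k^{\nc} \to \MSm_k^{\nc}$ induces a fully faithful left Kan extension $μ^*\colon \PSh(\MSm_k^{\nc}, \Sp) \to \PSh(\MSm_k^{\nc}, \Sp)$; both the Nisnevich sheafification $L_{\Nis}$ and the $\CI_{mn}$-localisation $L_{\CI_{mn}}$ admit the same \v{C}ech-style, respectively filtered-colimit, descriptions in the spectrum-valued setting (the latter still governed by \cite[Prop.3.4.(1)]{Hoy17}); and the argument that each of these preserves the essential image of $μ^*$ on local objects depends only on these descriptions. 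Iterating as in the unstable proof shows each transition $\MSH_k[\CI_n^{-1}] \to \MSH_k[\CI_{mn}^{-1}]$ is fully faithful, and consequently so is the canonical functor $\MSH_k \to \mmSH_k$ into the filtered colimit.

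The main obstacle, such as it is, is resisting the temptation to deduce ``$\Sp$ preserves fully faithfulness in $\PrL$'' abstractly. While likely true for the functors at hand, a fully general statement of this kind is delicate: the right adjoint to a fully faithful colimit-preserving functor in $\PrL$ need not itself be colimit-preserving, so applying $\Sp$ to the adjunction data does not automatically yield an adjunction in $\PrL^{\mathrm{st}}$ whose unit one can identify with $\Sp(\eta)$. Re-executing the proof of Proposition~\ref{prop:mH} at the stable level sidesteps this subtlety entirely, at the modest cost of repeating the same formal manipulations.
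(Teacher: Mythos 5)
Your proposal is correct, and for the colimit statement it coincides with the paper's (one-line) proof: the paper simply ``pushes Proposition~\ref{prop:mH} through the stabilisation functor'', which is exactly your observation that $\Sp(-) \simeq - \otimes \Sp$ is a left adjoint $\PrL \to \PrL^{\mathrm{st}}$, hence preserves the filtered colimit, and that it commutes with localisation at a small class of morphisms since such a localisation is characterised by a universal property among colimit-preserving functors (equivalently, is a pushout in $\PrL$). Where you genuinely diverge is the fully faithfulness claim: the paper offers no argument beyond the same ``push through $\Sp$'', whereas you correctly point out that $-\otimes\Sp$ does not formally preserve fully faithfulness of morphisms in $\PrL$ (the right adjoint witnessing the equivalence $\mathrm{id} \simeq \mu_*\mu^*$ must itself be colimit-preserving for the tensored adjunction to exist in $\PrL$), and you instead re-run the stepwise proof of Proposition~\ref{prop:mH} with spectrum-valued presheaves. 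This is sound: $\Sp(\Shv_{\Nis}(\MSm_k^{\nc})[\CI_n^{-1}]) \simeq \Shv_{\Nis}(\MSm_k^{\nc},\Sp)[\CI_n^{-1}]$ by the same commutation of $\Sp$ with localisations, left Kan extension along the fully faithful $\mu$ is fully faithful for any presentable coefficient category, and the \v{C}ech and filtered-colimit descriptions of $L_{\Nis}$ and $L_{\CI_{mn}}$ used in the unstable proof are coefficient-agnostic. Your route therefore fills in a step the paper leaves implicit; an alternative, slightly slicker repair would be to note that in this specific situation the right adjoint $\mu_*$ (restriction along $\mu$) is itself colimit-preserving on the relevant local subcategories, so the adjunction lives in $\PrL$ and $-\otimes\Sp$, being compatible with the $(\infty,2)$-categorical structure, carries the invertible unit to an invertible unit. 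Either way the conclusion stands.
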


\section{Transfers} \label{sec:transfers}

In this section we show in Example~\ref{exam:gmgm} that the transfers version of Proposition~\ref{prop:PSmLambdaSmlSm} fails.

\begin{defi}[{\cite{SVRelCyc}, \cite{VoeTri}, \cite[0H4B]{stacks-project}}] \label{defi:corr}
For $X, Y$ essentially%
\footnote{By \emph{essentially smooth} we mean a filtered intersection $\cap U_λ$ of open subschemes $U_λ \subseteq X$ of a smooth scheme $X$ whose transition morphisms $U_λ \to U_μ$ are affine.} %
smooth $k$-schemes, the abelian group of \emph{correspondences} from $X$ to $Y$ is the free abelian group
\[ \Cor(X, Y) = \ZZ \{ Z \subseteq X \times Y\ |\ Z \stackrel{\textrm{fin.surj.}}{\to} X \} \]
generated by closed integral subschemes $Z \subseteq X \times Y$ such that the projection $Z \to X \times Y \to X$ is finite and surjective over an irreducible component of $X$. The class of such a $Z$ is denoted $[Z]$. Correspondences of the form $[Z]$ are called \emph{prime}.
\end{defi}

\begin{exam}\ 
\begin{enumerate}
 \item 	(Graph) Given a morphism $X \stackrel{f}{\to} Y$ of essentially smooth schemes with $X$ connected, one defines the graph $[f]\in \Cor(X, Y)$ as the class $[(\id{\times}f)(X)]$ of the image of $X$ under the canonical morphism $\id {\times} f: X \to X \times Y$. If $X$ is not connected one defines $[f]$ as the sum of the graphs of the connected components of $X$.
 
 \item (Flat pullback) If 
$[Z] \in \Cor(X, Y)$ is prime correspondence 
and
$p: ξ \to X$ is a morphism from the spectrum $ξ$ of a field such that $Z \to X$ is flat at $p(ξ)$ one defines
\[ p^*[Z] := \sum_{i \in I} m_i[z_i] \]
where $z_i$ are points of $Z' = ξ \times_X Z$ and $m_i = \length \OO_{Z', z_i}$. %
For correspondences $\sum n_i [Z_i]$ such that each $Z_i \to X$ is flat at $p(ξ)$ we set $p^*\sum n_i [Z_i] = \sum n_i p^*[Z_i]$.
%
%
\end{enumerate}
\end{exam}

In \cite{SVRelCyc}, Suslin and Voevodsky construct a family of morphisms 
\[ \Cor(W, X) \times \Cor(X, Y) \to \Cor(W, Y); \qquad (\alpha, \beta) \mapsto \beta \circ \alpha \]
associated to triples of essentially smooth $k$-schemes $W, X, Y$ which is uniquely determined by the following properties, cf.\cite[Chap.2]{Kel13},\footnote{Actually, Suslin-Voevodsky construct compositions satisfying (and uniquely determined by) (1) and (2) for all reduced Noetherian schemes, however in general, $\Cor(X, Y)$ is a proper subgroup of the free abelian group from Definition~\ref{defi:corr}. The theory is extended to non-reduced Noetherian schemes by setting $\Cor(X, Y) := \Cor(X_{\red}, Y_{\red})$.}
\begin{enumerate}
 \item $(\gamma \circ \beta) \circ \alpha = \gamma \circ (\beta \circ \alpha)$.
 \item If $α = n_i [Z_i] \in \Cor(X, Y)$ is a correspondence and $p: ξ \to X$ is a morphism from the spectrum of a field such that each $Z_i \to X$ is flat at $p(ξ)$, then 
 \[ \alpha \circ [p] = p^*α. \]
 \end{enumerate}

\begin{rema}
Note that it follows directly from (1) and (2) that for any $Y$ and dominant morphism $p: X' \to X$ the corresponding morphism $p^*: \Cor(X, Y) \to \Cor(X', Y)$ is injective.
\end{rema}

\begin{defi}[{\cite{KMSYI}, \cite{KMSYII}}] \label{defi:MCor}
Given $\sX, \sY \in \PSm^{\nc}_k$, one can define $\MCor(\sX, \sY)$ as the subgroup of those correspondences $\sum n_i [Z_i] \in \Cor(\iX, \iY)$ such that each $\hZ_i \to \hX$ is proper and 
\[ \mX|_{\hZ_i^N} \supseteq \mY|_{\hZ_i^N} \]
for each $i$ where $\hZ_i^N \to \hZ_i$ is the normalisation of the closure $\hZ_i$ of $Z_i$ in $\hX \times \hY$. Composition is inherited from $\Cor_k$. We write $\MCor_k^{\nc}$ for the resulting category.
\end{defi}

\begin{rema}
Observe that the functor $\Lambda: \sX \mapsto \sX^{(n)}$ from Proposition~\ref{prop:PSmLambdaSmlSm} is functorial in $\MCor_k^{\nc}$ and the class of morphisms 
\[ \Lambda_{\Cor} = \{\sX^{(n)} \to \sX : n \geq 1, \sX \in \MCor_k^{\nc}\} \]
 continues to satisfy a right calculus of fractions for the same reasons the $\PSm^{\nc}_k$ version does.
\end{rema}

\begin{defi}[{\cite[Def.2.1.1]{BPO22}}] \label{defi:finCor}
For $\log \sX, \log \sY$ in $\SmlSm_k$ the subgroup 
\[ \lCor(\log \sX, \log \sY) \subseteq \Cor(\hX, \hY) \]
is defined as the set of those $\sum n_i [\hZ_i] \in \Cor(\hX, \hY)$ such that for each normalisation $\hZ_i^N$ we have an inclusion 
\begin{equation} \label{equa:XZYZ}
p^*_{\log}M_{\hX} \supseteq im(q^{-1}M_{\hY})
\end{equation}
of subsheaves of $\OO_{\hZ_i^N}$ where $p: \hZ_i^N \to \hX$ and $q: \hZ_i^N \to \hY$ are the induced morphisms.
\end{defi}

\begin{rema}
We saw in Remark~\ref{rema:indSub} that $p^*_{\log}M_{\hX}$ is a subsheaf of $\OO_{\hZ_i^N}$.
\end{rema}

\begin{rema} \label{rema:logXYinjCor}
We already observed that $\Cor(\hX, \hY) \to \Cor(\iX, \hY)$ is injective. The flat pullback map $\Cor(\iX, \hY) \to \Cor(\iX, \iY)$ is not injective in general, but since 
$\OO_{\iY} = \OO_{\hY}[M_{\hY}^{-1}]$, existence of a factorisation Eq.\eqref{equa:XZYZ} implies that the generic points of the $\hZ_i$ are in $\iX \times \iY$. So
\[ \lCor(\log \sX, \log \sY) \subseteq \Cor(\iX, \iY). \]
Definition~\ref{defi:finCor} can therefore be altered to read: a correspondence $\sum n_i [Z_i] \in \Cor(\iX, \iY)$ is in $ \lCor(\log \sX, \log \sY)$ if and only if for each $i$, 
\begin{enumerate}
 \item the projection $\hZ_i \to \hX$ is finite, where $\hZ_i$ is the closure of $Z_i$ in $\hX \times \hY$.
 \item we have the inclusion Eq.\eqref{equa:XZYZ} in Definition~\ref{defi:finCor}.
\end{enumerate}
\end{rema}

\begin{defi}[{\cite[Rem.4.6.2, Prop.4.6.3, Lem.4.4.3]{BPO22}}]
The subgroup of \emph{dividing log correspondences} is the colimit
\[ \lCor^{\div}(\log \sX, \log \sY) 
=
 \colim_{\Sigma_{\lmod}} \lCor(\log \sX', \log \sY) \]
over compositions of log modifications with a smooth centres $\log \sX' \to \log \sX$. 
\end{defi}

\begin{rema} \label{rema:suboo}
Since all $\lCor(\log \sX', \log \sY) \to \Cor((X')^\circ, \iY)$ are injective, Rem.\ref{rema:logXYinjCor}, and $(\iX)' \cong \iX$ for any $\sX' \to \sX$ in $\Sigma_{\lmod}$, we immediately get an inclusion
\[ \lCor^{\div}(\log \sX, \log \sY) \subseteq \Cor(\iX, \iY). \] 
\end{rema}

\begin{exam} \label{exam:gmgm}
We give an example to show that in general
\begin{equation} \label{equa:MCorLog}
\colim_{n \geq 1} \MCor_k^{\nc}(\sX^{(n)}, \sY) \neq \lCor(\log \sX, \log \sY).
\end{equation}
Our example $\sX = (\AA^1, \{0\}) = \sY$ and the cycle $x^2 = y^3$ in $\Cor(\AA^1 \setminus \{0\}, \AA^1 \setminus \{0\})$. This is in the left hand side, but not in the right hand side of Eq.\eqref{equa:MCorLog}. We now explain why.

Suppose that $\hX$ and $\hY$ are smooth connected curves so that we can ignore modifications and blowups. Let $\hW \to \hX$ be a finite morphism from a smooth connected curve, and $\hW \to \hY$ a morphism such that $\hW \to \hX \times \hY$ is generically an inclusion, so $\hW$ is the normalisation of its image. If $\hW \to \hY$ is constant then we get an element in the groups of Eq.\eqref{equa:MCorLog} if and only if it factors through $\iY$, so let's ignore this case, and assume that $\hW \to \hY$ is not constant.
The condition for the corresponding cycle in $\Cor(\iX, \iY)$ to lie in the left or right hand side of Eq.\eqref{equa:MCorLog} is local on $\hW$ so choose a closed point $w \in \hW$, and let $x, y$ be the images of $w$ in $\hX, \hY$. We obtain corresponding extensions of dvrs $\OO_{\hX, x} \subseteq \OO_{\hW, w}$, $\OO_{\hY, y} \subseteq \OO_{\hW, w}$. Let $e_x, e_y$ be their ramification degrees. Finally, let $n_x, n_y$ be the coefficients of $\mX, \mY$ at $x, y$. Then:
\begin{enumerate}
 \item $\hW$ defines a prime correspondance in $\MCor_k^{\nc}(\sX, \sY)$ if and only if $$n_xe_x \geq n_y e_y$$ for all $w$.

 \item $\hW$ defines a prime correspondance in $\lim_{n \geq 1} \MCor_k^{\nc}(\sX^{(n)}, \sY)$ if and only if $$n_x = 0 \Rightarrow n_y = 0$$ for all $w$.

 \item $\hW$ defines a prime correspondance in $\lCor_k(\log \sX, \log \sY)$ if and only if $$n_xe_x | n_y e_y$$ for all $w$, cf.Remark~\ref{rema:indSub}, Remark~\ref{rema:logXYinjCor}.
\end{enumerate}
Notice that (3) implies (2). For a concrete example of a situation where (2) holds but (3) does not, consider $\sX = (\AA^1, \{0\})$ and $\sY = (\AA^1, \{0\})$, let $\hW = \AA^1$, and take the morphisms coming from 
\begin{align*}
\AA^1 &\to \AA^1 \times \AA^1 \\
t &\mapsto (t^a, t^b)
\end{align*}
with $a, b > 0$ coprime.
In this case, we get a prime divisor of $\Cor(\AA^1 \setminus \{0\}, \AA^1 \setminus \{0\})$ which is in $\colim_{n \geq 1} \MCor_k^{\nc}(\sX^{(n)}, \sY)$ but not in $\lCor(\log \sX, \log \sY)$ whenever $a > 1$.
\end{exam}

\bibliographystyle{amsalpha}
\bibliography{bib}

\providecommand{\bysame}{\leavevmode\hbox to3em{\hrulefill}\thinspace}
\providecommand{\MR}{\relax\ifhmode\unskip\space\fi MR }
\providecommand{\MRhref}[2]{%
  \href{http://www.ams.org/mathscinet-getitem?mr=#1}{#2}
}
\providecommand{\href}[2]{#2}
\begin{thebibliography}{KMSY21b}

\bibitem[AHI24]{AHI24}
Toni Annala, Marc Hoyois, and Ryomei Iwasa, \emph{Algebraic cobordism and a
  {C}onner–{F}loyd isomorphism for algebraic {$K$}-theory}, Journal of the
  American Mathematical Society \textbf{38} (2024), no.~1, 243–289.

\bibitem[AI22]{AI22}
Toni Annala and Ryomei Iwasa, \emph{Motivic spectra and universality of
  {$K$}-theory}, arXiv:2204.03434, 2022.

\bibitem[BP{\O}22]{BPO22}
Federico Binda, Doosung Park, and Paul~Arne {\O}stv{\ae}r, \emph{Triangulated
  categories of logarithmic motives over a field}, Ast\'{e}risque (2022),
  no.~433, ix+267. \MR{4439182}

\bibitem[BP{\O}23]{BPO23}
Federico Binda, Doosung Park, and Paul~Arne {\O}stv{\ae}r, \emph{Logarithmic
  motivic homotopy theory}, 2023.

\bibitem[DK80]{DK80}
W.G. Dwyer and D.M. Kan, \emph{Calculating simplicial localizations}, Journal
  of Pure and Applied Algebra \textbf{18} (1980), no.~1, 17–35.

\bibitem[Hoy17]{Hoy17}
Marc Hoyois, \emph{The six operations in equivariant motivic homotopy theory},
  Adv. Math. \textbf{305} (2017), 197--279. \MR{3570135}

\bibitem[Kel13]{Kel13}
Shane Kelly, \emph{Triangulated categories of motives in positive
  characteristic}.

\bibitem[Kel20]{Kel20}
\bysame, \emph{A letter to {M}iyazaki},
  \url{https://www.ms.u-tokyo.ac.jp/~kelly/pdfs/AlettertoMiyazaki.pdf},
  December 2020.

\bibitem[KM21]{KM21}
Shane Kelly and Hiroyasu Miyazaki, \emph{Modulus sheaves with transfers}, 2021.

\bibitem[KM24]{KM24}
Junnosuke Koizumi and Hiroyasu Miyazaki, \emph{A motivic construction of the de
  rham-witt complex}, Journal of Pure and Applied Algebra \textbf{228} (2024),
  no.~6, 107602.

\bibitem[KMS25]{KMS25}
Junnosuke Koizumi, Hiroyasu Miyazaki, and Shuji Saito, \emph{Motivic homotopy
  theory with modulus}, 2025.

\bibitem[KMSY21a]{KMSYI}
Bruno Kahn, Hiroyasu Miyazaki, Shuji Saito, and Takao Yamazaki, \emph{{Motives
  with modulus, I: {M}odulus sheaves with transfers for non-proper modulus
  pairs}}, {{\'E}pijournal de G{\'e}om{\'e}trie Alg{\'e}brique} \textbf{{Volume
  5}} (2021).

\bibitem[KMSY21b]{KMSYII}
\bysame, \emph{{Motives with modulus, {II}: {M}odulus sheaves with transfers
  for proper modulus pairs}}, {{\'E}pijournal de G{\'e}om{\'e}trie
  Alg{\'e}brique} \textbf{{Volume 5}} (2021).

\bibitem[KMSY22]{KMSYIII}
\bysame, \emph{{Motives with modulus, III: The categories of motives}}, Annals
  of K-Theory \textbf{7} (2022), no.~1, 119 -- 178.

\bibitem[{Lur}06]{HTT}
Jacob {Lurie}, \emph{{Higher Topos Theory}}, arXiv Mathematics e-prints (2006),
  math/0608040.

\bibitem[Lur17]{HA}
Jacob Lurie, \emph{Higher algebra}, 2017.

\bibitem[Mat23]{Mat23}
Keiho Matsumoto, \emph{Gysin triangles in the category of motifs with modulus},
  Journal of the Institute of Mathematics of Jussieu \textbf{22} (2023), no.~5,
  2131--2154.

\bibitem[Ogu18]{Ogu18}
Arthur Ogus, \emph{Lectures on logarithmic algebraic geometry}, vol. 178,
  Cambridge University Press, 2018.

\bibitem[{Sta}18]{stacks-project}
The {Stacks Project Authors}, \emph{\textit{Stacks Project}},
  \url{https://stacks.math.columbia.edu}, 2018.

\bibitem[SV00]{SVRelCyc}
Andrei Suslin and Vladimir Voevodsky, \emph{Relative cycles and {C}how
  sheaves}, Cycles, transfers, and motivic homology theories, Ann. of Math.
  Stud., vol. 143, Princeton Univ. Press, Princeton, NJ, 2000, pp.~10--86.
  \MR{1764199}

\bibitem[Voe00]{VoeTri}
Vladimir Voevodsky, \emph{Triangulated categories of motives over a field},
  Cycles, transfers, and motivic homology theories, Ann. of Math. Stud., vol.
  143, Princeton Univ. Press, Princeton, NJ, 2000, pp.~188--238. \MR{1764202}

\end{thebibliography}

\end{document}